\newcommand{\nwc}{\newcommand}
\nwc{\aaa}{\mathcal{F}}
\nwc{\aap}{\mathcal{F}_{P}}
\nwc{\al}{\alpha}
\nwc{\C}{\mathbb{C}}
\nwc{\cb}{\overline{C}}
\nwc{\ccc}{\mathfrak{c}}
\nwc{\ch}{\widehat{C}}
\nwc{\cin}{\textbf{(v)}}
\nwc{\cl}{C'}
\nwc{\cp}{\mathcal{C}_{P}}
\nwc{\cpll}{\mathfrak{c}_{P'}}
\nwc{\ct}{\widetilde{C}}
\nwc{\dd}{\mathcal{L}}
\nwc{\ddd}{\mathfrak{d}}
\nwc{\ddl}{\mathcal{L}'}
\nwc{\dlp}{\delta_{P}}
\nwc{\doi}{\textbf{(ii)}}
\nwc{\enq}{$$}
\nwc{\fl}{\flushleft}
\nwc{\fff}{\mathcal{F}}
\nwc{\ffp}{\mathcal{F}_{P}}
\nwc{\ffq}{\mathcal{F}_{Q}}
\nwc{\ffl}{\mathcal{F}'}
\nwc{\G}{\mathcal{G}}
\nwc{\Ga}{\Gamma}
\nwc{\gtl}{\widetilde{g}}
\nwc{\hra}{\hookrightarrow}
\nwc{\hua}{h^{1}(C,\aaa )}
\nwc{\kk}{{\rm K}}
\nwc{\llb}{\mathcal{L}}
\nwc{\mb}{\mathbb}
\nwc{\mc}{\mathcal}
\nwc{\mm}{\mathfrak{m}}
\nwc{\mmp}{\mathfrak{m}_{P}}
\nwc{\mpd}{\mathfrak{m}_{P}^{2}}
\nwc{\nn}{\mathbb{N}}
\nwc{\ob}{\overline{\mathcal{O}}}
\nwc{\obr}{\mathcal{O}^*}
\nwc{\obp}{\overline{\mathcal{O}}_P}
\nwc{\och}{\mathcal{O}_{\widehat{C}}}
\nwc{\oh}{\widehat{\mathcal{O}}}
\nwc{\ohp}{\widehat{\mathcal{O}}_{P}}
\nwc{\ol}{\mathcal{O}'}
\nwc{\oma}{\Omega (\mathfrak{a})}
\nwc{\omo}{\Omega (\mathcal{O})}
\nwc{\oo}{\mathcal{O}}
\nwc{\op}{\mathcal{O}_P}
\nwc{\opc}{\mathcal{O}_{P,C}}
\nwc{\oph}{\widehat{\mathcal{O}}_{P}}
\nwc{\opl}{\mathcal{O}_{P}'}
\nwc{\oplc}{\mathcal{O}_{P,C}'}
\nwc{\opll}{\mathcal{O}_{P'}}
\nwc{\opt}{\widetilde{\mathcal{O}}_{P}}
\nwc{\optt}{{\mathcal{O}}_{\widetilde{P}}}
\nwc{\oq}{\mathcal{O}_{Q}}
\nwc{\oqt}{\widetilde{\mathcal{O}}_{Q}}
\nwc{\ot}{\widetilde{\mathcal{O}}}
\nwc{\overop}{\overline{\oo}_{P}}
\nwc{\pb}{\overline{P}}
\nwc{\pbb}{P^*}
\nwc{\pbi}{\overline{P_{i}}}
\nwc{\pbr}{\overline{P_{r}}}
\nwc{\pgmd}{\mathbb{P}^{g+2}}
\nwc{\pgmu}{\mathbb{P}^{g+1}}
\nwc{\ph}{\widehat{P}}
\nwc{\pp}{\mathbb{P}}
\nwc{\pt}{\widetilde{P}}
\nwc{\ptl}{\widetilde{P}}
\nwc{\pum}{\mathbb{P}^{1}}
\nwc{\qh}{\widehat{Q}}
\nwc{\qtl}{\widetilde{Q}}
\nwc{\qua}{\textbf{(iv)}}
\nwc{\ra}{\longrightarrow}
\nwc{\rh}{\widehat{R}}
\nwc{\sei}{\textbf{(vi)}}
\nwc{\sep}{\beq\ast\ \ast\ \ast\enq}
\nwc{\sig}{\sigma}
\nwc{\Sig}{\Sigma}
\nwc{\ssp}{S_{P}}
\nwc{\sss}{{\rm S}}
\nwc{\tre}{\textbf{(iii)}}
\nwc{\um}{\textbf{(i)}}
\nwc{\vpb}{v_{\overline{P}}}
\nwc{\vtxp}{\widetilde{V}_{x,P}}
\nwc{\vxp}{V_{x,P}}
\let \wt=\widetilde
\let \mc=\mathcal
\nwc{\wh}{\widehat{\omega}}
\nwc{\whp}{\widehat{\omega}_{P}}
\nwc{\woch}{\omega\cdot\mathcal{O}_{\widehat{C}}}
\nwc{\woh}{\omega\cdot\widehat{\mathcal{O}}}
\nwc{\ww}{\omega}
\nwc{\wwb}{\omega^*}
\nwc{\wwct}{\omega _{\widetilde{C}}}
\nwc{\wwh}{\widehat{\omega}}
\nwc{\wwhp}{\widehat{\omega}_P}
\nwc{\wwp}{\omega _{P}}
\nwc{\wwt}{\widetilde{\omega}}
\nwc{\wwtp}{\widetilde{\omega}_P}
\nwc{\zz}{\mathbb{Z}}
\newtheorem{coro}{Corollary}[section]
\newtheorem{lemma}[coro]{Lemma}
\newtheorem{prop}[coro]{Proposition}
\newtheorem{thm}[coro]{Theorem}
\theoremstyle{definition}
\newtheorem{rem}[coro]{Remark}
\newtheorem{theorem}{Theorem}
\numberwithin{equation}{section}
\let \fl=\flushleft
\let \ep=\epsilon
\let \sub=\subset
\let \be=\beta
\let \al=\alpha
\let \pr=\prime
\let \mf=\mathfrak
\begin{document}

\title{Real inflection points of real hyperelliptic curves}

\author{Indranil Biswas}

\address{School of Mathematics, Tata Institute for Fundamental Research, Homi Bhabha Road, Mumbai 
400005, India}

\email{indranil@math.tifr.res.in}

\author{Ethan Cotterill}

\address{Instituto de Matem\'atica, UFF, Rua M\'ario Santos Braga, S/N,
24020-140 Niter\'oi RJ, Brazil}

\email{cotterill.ethan@gmail.com}

\author{Cristhian Garay L\'opez}

\address{Instituto de Matem\'atica, UFF, Rua M\'ario Santos Braga, S/N,
24020-140 Niter\'oi RJ, Brazil}

\email{cgaray@impa.br}

\subjclass[2010]{14C20, 14T05, 14N10, 14P25}

\keywords{Real enumerative algebraic geometry, tropical geometry, real linear series,
real inflection points, real algebraic curves.}

\thanks{The first author
is supported by a J. C. Bose Fellowship. The second is supported by CNPq grant 309211/2015-8. The third author is supported by a CNPq PDJ fellowship, grant no. 401565/2014-9.}

\begin{abstract}
Given a real hyperelliptic algebraic curve $X$ with non-empty real part and a real effective divisor $\mc{D}$ arising via pullback from $\mathbb{P}^1$ under the hyperelliptic structure map, we study the real inflection points of the associated complete real 
linear series $|\mc{D}|$ on $X$.

%To do so we use Viro's patchworking of real plane curves, recast in the context of some Berkovich spaces studied by M. Jonsson \cite{J}. Our method notably is an alternative to {\it limit linear series} on metrized complexes of curves, as developed by O. Amini and M. Baker \cite{AB}.

To do so we use Viro's patchworking of real plane curves, recast in the
context of some Berkovich spaces studied by M. Jonsson. Our method gives a simpler and more explicit alternative to limit linear series on metrized complexes of curves, as
developed by O. Amini and M. Baker, for curves embedded in toric surfaces.
\end{abstract}

\maketitle

%\addcontentsline{toc}{section}{Acknowledgement}
\tableofcontents

\section{Introduction}

{\it Degeneration} has long been a potent tool in the study of linear series on projective algebraic curves. 
In the 1880's Castelnuovo used degenerations to irreducible rational nodal curves; one hundred years later Eisenbud and Harris \cite{EH} introduced a theory of {\it limit linear series} to deal systematically with degenerations to (abstract) reducible curves. Around the same time that Eisenbud and Harris developed their theory, Viro \cite{V} introduced his {\it patchworking} degeneration for hypersurfaces embedded in toric varieties. In Viro's construction the limit is not algebraic in the usual sense, but comes naturally equipped with a sheaf of piecewise-linear regular functions. This limit is combinatorially robust, and for small values of the deformation parameter the algebraic hypersurfaces retain (some of) its properties. Viro's construction plays a key r\^ole in tropical geometry and in real algebraic geometry, and notably has been used successfully to study linear series over the real numbers. 

\medskip
The third author in his
thesis \cite{G} has used Viro's patchworking construction to study real inflection 
points of real canonical curves of genus four in $\mb{P}^3$. There, the limiting object was equal 
to the dual graph of a necklace of elliptic curves; by combining the real inflectionary behavior 
of a single elliptic curve with Viro's patchworking construction he was able to exhibit canonical 
curves of genus four in $\mb{RP}^3$ with 30 real inflection points. In doing so, he built on 
earlier work of Brugall\'e and L\'opez de Medrano \cite{BL} who used Viro's patchworking to 
systematically construct real algebraic plane curves with the maximal possible number of real 
inflection points. 

\medskip
In this paper, we will apply an enhanced version of patchworking towards the construction of 
real maximally-inflected linear series on {\it hyperelliptic} curves of arbitrary genus. More 
precisely, by combining patchworking with a Berkovich-analytic construction of Jonsson \cite{J} 
we exhibit a {\it metrized complex} of curves, in the sense of \cite{AB}, as the {\it analytic} 
limit of a family of real hyperelliptic curves. As systems of linear series along the smooth curve 
components of a metrized complex, the corresponding limits of linear series arising from our 
construction are similar to those of \cite{AB}. Our construction is, however, more explicit, as 
each curve component is the normalization of a curve defined by an equation. As we will see, our specialization method is {\it canonically} prescribed by the tropicalization map, whereas the usual algebraic approach to specialization relies on the use of judiciously-chosen semistable models over the valuation ring. While constructing explicit algebraic models is easy when the generic fiber is {\it general} in moduli, it is more delicate when the generic fiber has special geometry, as is the case here. We expect that our construction should be useful more generally for probing the geometry of linear series on curves embedded as complete intersections in toric varieties.

\medskip
In the remainder of this paper, we will make use of a number of formal conventions. A {\it linear series of degree $d$ and rank $r$}, or $g^r_d$, on an algebraic curve $X$ consists of a line bundle $L$ of degree $d$ together with an $(r+1)$-dimensional vector subspace $V \sub H^0(X,L)$ of holomorphic sections, for some $r \geq 0$; it is {\it complete} when $V=H^0(X,L)$.
Given a real hyperelliptic algebraic curve $X$ of 
genus $g\,\geq\,1$ with non-empty real part, $[D]\,\in \,\mbox{Pic}^2 X$ will denote
%a $g^1_2$ 
the pullback on $X$ of a real point from $\mathbb{P}^1$ under the hyperelliptic structure map. 

\medskip
Our main quantitative results relate to the total number of real inflection points of the complete real linear series $|\mathcal{L}_{\mathbb{R}}(kD)|$; the answer depends on how large $k$ is relative to $g$.

\medskip
Whenever $1\,\leq \,k \,\leq\, g$, %the associated complete real linear series $|\mathcal{L}_{\mathbb{R}}(kD)|$ is a $g^{k}_{2k}$. It follows from 
the classical Pl\"ucker formula implies that the total number of inflection points of the complexification $|\mathcal{L}(kD)|$ is $w_\mathbb{C}(g,k):=k(k+1)(g+1)$. 
%Our main quantitative results relate to the total number of real inflection points of $|\mathcal{L}_{\mathbb{R}}(kD)|$. %Our main quantitative results are estimates for the number of real inflection points of $|\mathcal{L}_{\mathbb{R}}(kD)|$, as a fraction of $w_\mathbb{C}(g,k)$.
In {\bf Theorem~\ref{realpartcaseg}} we give a complete characterization of the distribution of
the real inflection points of $|\mathcal{L}_\mathbb{R}(kD)|$ along the connected components of 
the real part of $X$ whenever $1\,\leq\, k \,\leq\, g$. In particular, we deduce the following result.

\begin{theorem}
Let $X$ be a real hyperelliptic curve of genus $g\,\geq\,1$ with $n(X)\,\geq\,1$ real connected
components and suppose that $1\,\leq\, k \,\leq\, g$. The real linear series $|\mathcal{L}_\mathbb{R}(kD)|$ then
has precisely $\frac{n(X)}{g+1}w_\mathbb{C}(g,k)\,=\,k(k+1)n(X)$ real inflection points.
\end{theorem}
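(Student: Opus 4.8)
The plan is to compute the total inflection count via the complex Plücker formula and then argue that, by the real structure and the symmetry of the hyperelliptic configuration, the real inflection points distribute evenly across the $n(X)$ real components, each receiving the same share. Let me think carefully about how the pieces fit together.

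We have a real hyperelliptic curve $X$ of genus $g$ with $n(X)$ real connected components. The divisor $D$ is the pullback of a real point of $\mathbb{P}^1$ under the hyperelliptic map, so $\deg D = 2$ and $kD$ has degree $2k$. The complexification $|\mathcal{L}(kD)|$ is a $g^r_d$ whose total inflectionary weight by Plücker is $w_{\mathbb{C}}(g,k) = k(k+1)(g+1)$. For $1 \le k \le g$, the linear series is presumably ramification-generic enough that all inflection points have weight one (simple inflections), so there are exactly $w_{\mathbb{C}}(g,k)$ of them over $\mathbb{C}$. The theorem asserts the real count is $\frac{n(X)}{g+1} w_{\mathbb{C}}(g,k) = k(k+1)\,n(X)$.

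The key structural input should be the characterization established in the referenced Theorem~\ref{realpartcaseg}, which describes the distribution of real inflection points along the connected components. Let me think about what that characterization must say. The hyperelliptic structure map realizes $X$ as a double cover of $\mathbb{P}^1$. A real hyperelliptic curve with $n(X)$ ovals arises by a double cover branched at real points, and the sections of $\mathcal{L}(kD) = \mathcal{O}(k \cdot g^1_2)$ pull back from rational functions on $\mathbb{P}^1$. Because $D$ comes from $\mathbb{P}^1$, the linear series $|\mathcal{L}(kD)|$ is compatible with the hyperelliptic involution, and the inflection points should be organized symmetrically with respect to both the involution and the real structure.

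So my plan has three steps. First, I would invoke the complex Plücker count $w_{\mathbb{C}}(g,k) = k(k+1)(g+1)$ and verify that under the hypothesis $1 \le k \le g$ every inflection point is simple, so that the complex inflection points number exactly $k(k+1)(g+1)$. Second, I would appeal to Theorem~\ref{realpartcaseg}: its explicit description of how the real inflection points sit on the connected components should, by the symmetry of the hyperelliptic patchworking construction, assign each of the $n(X)$ ovals the same number of real inflection points, namely $\frac{1}{g+1} w_{\mathbb{C}}(g,k) = k(k+1)$ per component. Third, I would sum over the $n(X)$ components to obtain the total $k(k+1)\,n(X)$. The main obstacle, and the real content, is the second step: establishing that the distribution is uniform across components, each carrying the fraction $\frac{n(X)}{g+1}$ of the complex weight. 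This uniformity is precisely what the patchworking/tropicalization machinery of the paper is designed to control, since the metrized-complex limit records the combinatorial symmetry that forces the even distribution; extracting the per-component count $k(k+1)$ cleanly from Theorem~\ref{realpartcaseg} is where the careful bookkeeping lives.
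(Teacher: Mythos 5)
There is a genuine gap, and it sits exactly where you located the ``real content.'' Your first step is already false: for $2 \le k \le g$ the complex inflection points of $|\mathcal{L}(kD)|$ are \emph{not} simple. Since $kD = \pi^*(k\cdot\infty)$, the series is spanned by $1, x, \dots, x^k$, i.e.\ it is pulled back from $\mathbb{P}^1$, and a pulled-back series can only inflect where the pullback map ramifies. The paper's proof (Proposition~\ref{troispartiesdiv} plus Theorem~\ref{realpartcaseg}) makes this quantitative by a direct Wronskian computation: away from the ramification locus the Wronskian is constant, and near a ramification point a row-reduction argument gives $\alpha|_{U_x} = a\,(x')^{\binom{k+1}{2}}$, so that $\mathrm{Inf}(|\mathcal{L}_\mathbb{R}(kD)|) = \binom{k+1}{2} R_\pi$. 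Thus the inflection divisor is supported at the $2g+2$ Weierstrass points, each with weight $\binom{k+1}{2}$; the Pl\"ucker total $k(k+1)(g+1) = (2g+2)\binom{k+1}{2}$ is recovered, but there are only $2g+2$ distinct inflection points, not $k(k+1)(g+1)$ simple ones. The stated count $k(k+1)n(X)$ is the \emph{real degree} of the inflection divisor: the curve $y^2 = f(x)$ of \eqref{HyperellipticCurve} has $2n(X)-1$ real roots of $f$ plus the real branch point at $\infty$, hence $2n(X)$ real Weierstrass points, exactly two on each oval, giving $2n(X)\binom{k+1}{2} = k(k+1)n(X)$.

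Your second step compounds the problem in two ways. First, appealing to Theorem~\ref{realpartcaseg} is circular for a blind proof: the statement you were asked to prove is precisely the corollary the paper deduces from that theorem, so the Wronskian identity $\mathrm{Inf} = \binom{k+1}{2}R_\pi$ is the content you must supply, not cite. Second, the mechanism you propose for it---uniform distribution of real inflection points across ovals forced by a symmetry of the ``hyperelliptic patchworking construction''---is not how the even split arises and is not available here: the patchworking/tropicalization machinery of Sections~\ref{Jonsson-Viro}--\ref{Combinatorial_construction} is deployed only for $k > g$, where the distribution of the non-ramification part $S$ of the inflection divisor is genuinely subtle (that subtlety is the point of the paper's second half, and no uniform-distribution principle holds there). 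For $k \le g$ the per-oval count $k(k+1)$ follows from concentration at ramification together with the elementary fact that each real oval of a hyperelliptic curve carries exactly two real branch points---no degeneration argument is needed, and none would substitute for the missing Wronskian computation.
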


On the other hand, whenever $k > g$, the real complete linear series $|\mathcal{L}_{\mathbb{R}}(kD)|$ is a $g^{2k-g}_{2k}$ and the total number of inflection points of the complexification $|\mathcal{L}(kD)|$ is $w_\mathbb{C}(g,k):=g(2k-g+1)^2$. In this case, {\bf Theorem~\ref{inflection_along_ramification_divisor}} gives  a lower bound for the number of real inflection points.

\begin{theorem}\label{lower_reality_bound_for_hyperell_curve} Let $X$ be a real hyperelliptic curve of genus $g\,\geq\,1$ with $n(X)\,\geq\,1$ real connected
components and let $k>g$.
The real linear series $|\mathcal{L}_\mathbb{R}(kD)|$ has at least $g(g+1)n(X)$ real inflection points.
\end{theorem}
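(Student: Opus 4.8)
The plan is to isolate the portion of the inflection of $|\mathcal{L}_{\mathbb{R}}(kD)|$ supported on the ramification divisor $R=\sum_i W_i$ of the hyperelliptic map $\pi\colon X\to\mathbb{P}^1$ (the $W_i$ being the Weierstrass points of $X$), and to show that this portion alone already realizes the asserted bound. My first step is a purely local computation of the inflectionary weight of the complete series $|kD|$ at a single Weierstrass point $W$. Using the description $\pi_*\mathcal{O}_X=\mathcal{O}_{\mathbb{P}^1}\oplus\mathcal{O}_{\mathbb{P}^1}(-g-1)$, the space $H^0(X,kD)$ is spanned by $1,x,\dots,x^k$ together with $y,xy,\dots,x^{k-g-1}y$. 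Choosing a local coordinate $t$ at $W$ adapted to the double cover, so that $x-x(W)=t^2$ and $y=t\cdot u$ with $u(W)\neq 0$, the span of $1,x,\dots,x^k$ realizes precisely the even vanishing orders $0,2,\dots,2k$, while $y,xy,\dots,x^{k-g-1}y$ realizes the odd orders $1,3,\dots,2k-2g-1$. Merging the two lists, the vanishing sequence of $|kD|$ at $W$ agrees with the generic sequence up to the value $2k-2g-1$ and thereafter skips every odd integer, so its top $g+1$ terms are $2k-2g,\,2k-2g+2,\dots,2k$.

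The crux of the argument is that this computation is independent of $k$ for every $k\geq g$: subtracting the generic sequence $0,1,2,\dots$ term by term leaves the string $0,\dots,0,1,2,\dots,g$, so the inflectionary weight of $|kD|$ at each Weierstrass point equals $\sum_{j=0}^{g}j=\frac{g(g+1)}{2}$ for all $k\geq g$. The point I would stress is that the extra $y$-sections available when $k>g$ only supply odd orders below $2k-2g$, and hence never disturb the gap structure at the top of the sequence responsible for the inflection; this is the concrete manifestation of inflection concentrating along $R$.

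Next I would pin down how many Weierstrass points are real. Here I invoke the case $k=g$ of Theorem~\ref{realpartcaseg}: since $w_{\mathbb{C}}(g,g)=g(g+1)^2=(2g+2)\cdot\frac{g(g+1)}{2}$ coincides with the total weight carried by the $2g+2$ Weierstrass points, and all inflectionary weights are non-negative, the complete series $|gD|$ has all of its inflection supported on $R$. Consequently its $g(g+1)n(X)$ real inflection points, counted with weight, are all real Weierstrass points; combined with the weight $\frac{g(g+1)}{2}$ just computed, this forces $X$ to possess exactly $2n(X)$ real Weierstrass points. As this count is intrinsic to the pair $(X,\pi)$, it is the same for every $k$.

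Finally, for $k>g$ the same $2n(X)$ real Weierstrass points remain real inflection points of $|\mathcal{L}_{\mathbb{R}}(kD)|$, each still of weight $\frac{g(g+1)}{2}$ by the $k$-independence established above, and so contribute real inflectionary weight $2n(X)\cdot\frac{g(g+1)}{2}=g(g+1)n(X)$. Since every inflectionary weight is non-negative, the total number of real inflection points of $|\mathcal{L}_{\mathbb{R}}(kD)|$ is at least this contribution from $R$, which is exactly the claimed bound. The main obstacle in executing this plan lies in the local analysis of the first step: verifying that in the adapted coordinate the $x$- and $y$-sections contribute vanishing orders of disjoint parity with no unexpected cancellation, and confirming that these orders are unaffected should a Weierstrass point happen to lie over the real point defining $D$. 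Both are controlled by the explicit splitting of $\pi_*\mathcal{O}_X$ recorded above.
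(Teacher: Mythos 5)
Your proposal is correct and follows essentially the same route as the paper: the paper's Theorem~\ref{inflection_along_ramification_divisor} performs the same parity-of-vanishing-orders computation (via $\mbox{div}_X(x)$ and $\mbox{div}_X(y)$) to show each ramification point of $\pi$ carries inflectionary weight $\binom{g+1}{2}$ independently of $k\geq g$, and the bound then follows from the $2n(X)$ real Weierstrass points exactly as you argue. The only cosmetic difference is that you deduce the count of $2n(X)$ real Weierstrass points from the $k=g$ case of Theorem~\ref{realpartcaseg}, whereas the paper reads it off directly from the normal form \eqref{HyperellipticCurve}, where the real branch points are $0,\infty$ and $p_1,\dots,p_{2n(X)-2}$.
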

The previous result is explained by a more general fact that we prove via a local analysis of vanishing orders of holomorphic sections of $|\mathcal{L}_\mathbb{R}(kD)|$, namely that each point of the ramification locus of the hyperelliptic cover $X \ra \mb{P}^1$ has inflectionary weight $\binom{g+1}{2}$.

\medskip
The remainder of our results are related to our Jonsson and Viro-based construction of limit
linear series on a (marked) metrized complex of elliptic curves for the case $k>g$. In
{\bf Lemma~\ref{specialization_of_linear_series}}, we show how the Jonsson--Viro degeneration
produces a $g^{2k-g}_{2k}$ along each elliptic component $E_i$, $i=1,\dots,g$, of the metrized
complex. In {\bf Theorem~\ref{inflection_along_ell_curves}} we calculate the vanishing
sequences for holomorphic sections in each of four marked points along $E_i$, including
those {\it points of attachment} corresponding to edges linking neighboring elliptic
components. One upshot (see {\bf Corollary~\ref{compatibility}}) of
Theorem~\ref{inflection_along_ell_curves} is that our limit linear series satisfy the natural
analogue of the compatibility condition for vanishing sequences in points of attachment that
characterizes Eisenbud--Harris limit linear series. 

\medskip
Another is an explicit calculation of the
inflectionary weight contributed by the marked points along each $E_i$. Further,
in {\bf Theorem~\ref{convergence}}, we prove a regeneration-type result that specifies
precisely how the inflection divisor of the complete linear series $|\mc{L}_{\mb{R}}(kD)|$ along
the hyperelliptic curve $X$ is related to the inflection divisors of the series $g^{2k-g}_{2k}$
along the elliptic components $E_i$. Regeneration depends crucially on the {\it openness} of the projection of the Jonsson--Viro degeneration from the total space to the underlying parameter space in an analytic neighborhood of the central fiber. 

\medskip
Finally, Theorems~\ref{inflection_along_ramification_divisor} and \ref{convergence}  may be used to produce complete linear series on real hyperelliptic curves with {\it controlled} real inflection. We make this explicit in {\bf Theorem~\ref{lower_bounds_for_reality}}. 

\medskip
The roadmap of this paper is as follows. In Section \ref{FactsandDefinitions}, we recall some basic facts about real linear series on a real algebraic curve and their inflection points, which determine a corresponding inflection divisor. In Section \ref{RealHyperCurves} we recall some basic facts about real hyperelliptic curves, and given a real hyperelliptic curve $X$ of genus $g\geq1$ with non-empty real part and a $[D] \in \mbox{Pic}^2(X)$ a real $g^1_2$, we characterize the real inflectionary degree of $|\mathcal{L}_\mathbb{R}(kD)|$ whenever $k \leq g$, and establish some general properties of the real inflection locus of $|\mathcal{L}_\mathbb{R}(kD)|$ whenever $k > g$. In Sections \ref{Jonsson-Viro} and \ref{Section_Specialization} we use tropicalization techniques to enhance Viro's patchworking and to define a canonical specialization, by which we associate a metrized complex of curves to any given family of plane hyperelliptic curves. %As we have stated above, a key technical point here is that tropicalization in our context allows us to set up a canonical specialization process.
Finally in Section \ref{Combinatorial_construction} we give a combinatorial method to construct real hyperelliptic curves of genus $g\geq2$ with controlled numbers of real inflection points of the real linear series $|\mathcal{L}_\mathbb{R}(kD)|$ for $k\geq g+1$.

{\fl \it Acknowledgement.} The second author thanks the Tata Institute for hospitality and support in February-March 2017, during which collaboration on this project started. All three authors thank the anonymous referee for his or her comments, which have led to improvements in the exposition.

\section{Fundamental facts and definitions}\label{FactsandDefinitions}

In what follows a {\it real algebraic variety} $X$ denotes a pair $X\,=\,(X_\mathbb{C},\,\sigma_X)$ 
consisting of a complex algebraic variety $X_\mathbb{C}$ together with an anti-holomorphic 
involution $\sigma_X\,:\,X_\mathbb{C}\longrightarrow\,X_\mathbb{C}$. Equivalently, a real algebraic 
variety is any complex algebraic variety of the form $X=X_\mathbb{R}\otimes_\mathbb{R}\mathbb{C}$,
where $X_{\mathbb{R}}$ is a scheme defined over $\mathbb{R}$. We denote by $X(\mathbb{R})=(X_\mathbb{C})^{\sigma_X}$ the real 
part of $X$ and by $n(X)$ the number of connected components of $X(\mathbb{R})$.

\medskip
A {\it morphism} between real algebraic varieties $(X_\mathbb{C},\,\sigma_X)$ and $(Y_\mathbb{C},\,\sigma_Y)$ is
a morphism $$f\,:\,X_\mathbb{C}\,\longrightarrow\, Y_\mathbb{C}$$ of complex algebraic varieties compatible with the corresponding anti-holomorphic involutions, i.e. such that $f\circ\sigma_X=\sigma_Y\circ f$.

\medskip 
Hereafter we restrict our attention to smooth real algebraic curves $X\,=\,(X_\mathbb{C},\,
\sigma_X)$ with $X(\mathbb{R})\,\neq\,\emptyset$. In that case, any $\sigma_X$-invariant divisor on
$X$ is of the form
\begin{equation}
\label{sigma-invariant_divisors}
D=\sum_{p\in X(\mathbb{R})}n_p\cdot p+\sum_{p\notin X(\mathbb{R})}n_p\cdot(p+\sigma_X(p)).
\end{equation}
The first summand in \eqref{sigma-invariant_divisors} is the {\it real part} of $D$, and we denote its degree as $\text{deg}_\mathbb{R}(D)$. We say that $D$ is {\it totally real} if it coincides with its real part. 

\medskip
Moreover, as explained in \cite{GH}, the real part $\text{Pic }X(\mathbb{R})$ of $\mbox{Pic } X$ is precisely the set of linear equivalence classes represented by a $\sigma_X$-invariant divisor. Let $S_1,\ldots,S_{n(X)}$ be the connected components of $X(\mb{R})$. There is a corresponding {\it parity homomorphism} 
\[
c:\text{Pic }X(\mathbb{R})\longrightarrow(\mathbb{Z}/2\mathbb{Z})^{n(X)}
\]
defined by 
\[
[D]\longmapsto (\text{deg}(D|_{S_1})\mod2,\ldots,\text{deg}(D|_{S_{n(X)}})\mod2).
\]
An obvious but nonetheless useful fact is that the parity of $\text{deg}(D)$ is the parity of the sum of the components of its parity vector $c(D)$.

\medskip
The {\it topological type} of $X$ is the triple $(g(X),n(X),a(X))$, where as usual %\cite[Section 3]{GH}, the invariant 
$a(X)$ is one or zero depending on whether $X(\mb{C}) \setminus X(\mb{R})$ is connected or disconnected, respectively.

\medskip
Given a $\sigma_X$-invariant divisor $D$, we denote by $\mathcal{L}_\mathbb{R}(D)$ the real algebraic line bundle defined by $D$, and we denote its complexification $\mathcal{L}_\mathbb{R}(D)\otimes_\mathbb{R}\mathbb{C}$ simply by $\mathcal{L}(D)$. Then 
%$H^0(\mathcal{L}_\mathbb{R}(D))$ 
$H^0(X_{\mathbb{R}},\, \mathcal{L}_\mathbb{R}(D))$ is a real vector space satisfying 
\[
%H^0(\mathcal{L}_\mathbb{R}(D))\otimes_{\mathbb{R}}\mathbb{C}=H^0(\mathcal{L}(D)).
H^0(X_{\mathbb{R}},\, \mathcal{L}_\mathbb{R}(D))\otimes_{\mathbb{R}}\mathbb{C}\,=\, H^0(X_\mathbb{C},\, \mathcal{L}(D)).
\]
A {\it real linear series} (of degree $d$ and rank $r$) on $X$ is a pair $(L_\mathbb{R},V_\mathbb{R})$ consisting of an algebraic line bundle $L_\mathbb{R}\in \text{Pic }X(\mathbb{R})$ of degree $d$ and a real vector subspace $V_\mathbb{R}\subseteq H^0(X_{\mathbb{R}},L_\mathbb{R})$ of dimension $r+1 \geq 1$. The {\it inflection divisor } associated to $(L_\mathbb{R},V_\mathbb{R})$ is the divisor $\text{Inf}(L_\mathbb{R},V_\mathbb{R})=\sum_{p\in X}|p|\cdot p$, where $|p|$ is the inflectionary weight of $(L_\mathbb{R},V_\mathbb{R})$ at $p$; it is an effective $\sigma_X$-invariant divisor of degree $(r+1)(d+r(g-1))$, where $g$ is the genus of $X$.

\medskip
Let $(\mathcal{L}_\mathbb{R}(D),V_\mathbb{R})$ be a real linear series of degree $d$ and rank $r$ on $X$. Associated to a basis $\mathcal{F}=\{f_0,\ldots,f_r\}$ of $V_\mathbb{R}$, there is a (real) section $\text{Wr}(\mathcal{F})\in H^0(X_{\mb{R}},\mathcal{L}_\mathbb{R}((r+1)D+\binom{r+1}{2}K_X))$ called the {\it Wronskian} of $\mathcal{F}$. The divisor $\text{div Wr}(\mathcal{F})$ is independent of the choice of the basis for $V_\mathbb{R}$, and coincides with the inflection divisor of $(\mathcal{L}_\mathbb{R}(D),V_\mathbb{R})$.

\medskip
Hereafter, given a hyperelliptic curve $X$ over a field $F$,
$$\pi\,:\,X\,\longrightarrow \,F\mathbb{P}^1$$ will denote the two-sheeted branched cover obtained from its $g_2^1$, and $R_\pi$ will denote the ramification divisor of $\pi$. We will also abusively omit including either $X_{\mb{R}}$ or $X_{\mb{C}}$ in our notation for spaces of (real or holomorphic) sections of line bundles whenever the choice is clear from the context.

\section{Real linear series on real hyperelliptic curves}\label{RealHyperCurves}

Let $X\,=\,(X_\mathbb{C},\,\sigma_X)$ be a real hyperelliptic curve of genus $g\geq 1$ and let $[D]
\,\in \,\text{Pic }X$ be a $g_2^1$ on $X$. If $X(\mathbb{R})\,\neq\,\emptyset$, then $[D]
\,\in\,\text{Pic }X(\mathbb{R})$ and $\mathbb{P}(H^0(\mathcal{L}(D)))\,\cong\,(\mathbb{CP}^1,\sigma_{\mathbb{CP}^1})$, where $\sigma_{\mathbb{CP}^1}$ is the %canonical 
real structure $[z_1:z_2]\longmapsto[\overline{z_1}:\overline{z_2}]$ given by conjugation. It follows that the two-sheeted branched cover $\pi:X_\mathbb{C}\longrightarrow\mathbb{CP}^1$ obtained from $[D]$ is defined over $\mathbb{R}$.

\medskip
The function field $K(\mathbb{CP}^1,\sigma_{\mathbb{CP}^1})$ of $(\mathbb{CP}^1,\sigma_{\mathbb{CP}^1})$ is of the form 
\begin{equation}\label{function_field_P^1}
K(\mathbb{CP}^1,\sigma_{\mathbb{CP}^1})=\mathbb{R}(x);
\end{equation}
with respect to the identification \eqref{function_field_P^1}, the function field of $X$ is of the form 
\[
K(X)=\mathbb{R}(x)[y]/(y^2-f(x))
\]
for some separable polynomial $f(x)\in\mathbb{R}[x]$ of degree $2g+1$ or $2g+2$. We suppose that $2g$ branch points of the map $\pi$ lie in $\mathbb{C}^*$ and that $X$ has $n(X)$ real connected components; here $1\leq n(X)\leq g+1$. Let $U\subset\mathbb{C}^2$ denote the real affine plane curve defined by
\begin{equation}\label{HyperellipticCurve}
y^2=f(x)=x\prod_{i=1}^{2n(X)-2}(x-p_i)\prod_{j=1}^{g+1-n(X)}(x-q_j)(x-\overline{q_j})
\end{equation}
where the points $p_i\in\mathbb{R}^*$ and $\:q_j\in\mathbb{C}\setminus\mathbb{R}$ are all distinct. %If $\overline{U}$ denotes the compactification of $U$ inside the toric variety $\mathbb{CP}^1\times\mathbb{CP}^1$, then we have $X\,=\,\overline{U}$ and we abusively let $\overline{U}\setminus U\,=\,\{\infty\}$ denote the preimage of the point $\infty \in \mb{P}^1$ that is distinguished by our choice of affine coordinates.

\begin{rem}\label{comp}
Hereafter we will denote by $U$ the real affine plane curve \eqref{HyperellipticCurve}, and by $X$ its compactification $U\cup\{\infty\}$ in $\mathbb{CP}^1\times\mathbb{CP}^1$. We use $\infty$ to denote the preimage of the point $\infty \in \mb{P}^1$ that is distinguished by our choice of affine coordinates. As we are assuming the real structure on $X$ is given by $(x,y)\mapsto(\overline{x},\overline{y})$, the topological type $(g,n(X),a)$ of the real hyperelliptic curve $X$ of genus $g\geq2$ is $(g,n(X),0)$ whenever $n(X)=g+1$, and $(g,n(X),1)$ otherwise \cite{C}.% as usual $a=1$ or $a=0$ depending on whether the complement of $X(\mb{R})$ is connected or not. 
\end{rem}

Now consider the divisor $D'=\infty$ in $(\mathbb{CP}^1,\sigma_{\mathbb{CP}^1})$; we have
\[
[D]=[\pi^*(D^{\prime})]=[2 \cdot \infty] \text{ and }[K_{X}]=[(g-1)\pi^{\ast}(D^{\prime})]=[(2g-2) \cdot \infty]
\]
and for $k\geq g$, the space $H^0(\mathcal{L}_\mathbb{R}(kD))$ is a real vector space of 
dimension $r+1\,=\,2k-g+1$ with a real basis $\mathcal{F}\,=\,\{f_0,\ldots,f_{2k-g}\}\,\subset\, K(X)$ given 
by $f_i\,=\,x^i$ for $0\,\leq\, i\,\leq\, g$ when $k\,=\,g$, and by $f_i\,=\,x^i$ for
$0\,\leq\, i\,\leq\, k$ and $f_{i}\,=\,x^{i-k-1}y$ for $k+1\,\leq\, i\,\leq\, 2k-g$ whenever $k\,>\,g$.

\medskip
The basic theory of Section~\ref{FactsandDefinitions} implies that for a certain real rational function $h$ on $X$, the divisor of the Wronskian associated to the basis $\mathcal{F}$ of $H^0(\mathcal{L}_\mathbb{R}(kD))$ is of the form 
\[
\begin{aligned}
\text{div Wr}(\mathcal{F})&=(r+1)kD+\tfrac{r(r+1)}{2}K_{X}+\text{div}_{X}(h)\\
&=(r+1)(2k+r(g-1))\cdot\infty+\text{div}_{X}(h).
\end{aligned}
\]
It follows that the Wronskian divisor is effective, $\sigma_X$-invariant, and of degree 
\[
(r+1)(2k+r(g-1)) \,=\,g(2k-g+1)^2
\]
on $X$.

\medskip
Note that since $\text{div Wr}(\mathcal{F})$ is effective, the pole divisor
$\text{div}_\infty(h)$ of $h$ is supported on $\{\infty\}$. In particular, there exists a real
regular function $\alpha$ on $U$ such that $\text{div}_0(h)\,=\,\text{div}_U(\alpha)$; and further,
since $U\subset\mathbb{C}^2$, we can choose a representative for $\alpha$ that is regular
on $\mathbb{C}^2$.

\medskip
In other words, there exists some $\alpha\,\in\,\mathbb{R}[x,y]$ in terms of which the
inflection divisor of the complete real linear series $|\mathcal{L}_\mathbb{R}(kD)|$ on $X$
may be realized as 
\begin{equation}
\label{InflectionDivisor}
\text{Inf}(|\mathcal{L}_\mathbb{R}(kD)|)=\text{div}_U(\alpha)+m\cdot\infty=[U\cap V(\alpha)]+m\cdot\infty
\end{equation}
where $m=g(2k-g+1)^2-\text{deg div}_U(\alpha)\geq0$ and $[U\cap V(\alpha)]$ is the divisor associated to the intersection scheme $U\cap V(\alpha)$. In particular, the real part of $\text{Inf}(|\mathcal{L}_\mathbb{R}(kD)|)$ consists of the real part of $[U\cap V(\alpha)]$ together with $m\cdot\infty$.

\medskip
Now let $\sigma_h:X\longrightarrow X$ be the hyperelliptic involution sending $(x,y)$ to $(x,-y)$, and let $G\subset\text{Aut}(X)$ be the subgroup generated by $\sigma_X,\sigma_h$, which is isomorphic to $\mathbb{Z}/2\mathbb{Z}\times \mathbb{Z}/2\mathbb{Z}$, as $\sigma_X\sigma_h
\,=\,\sigma_h\sigma_X$.

\begin{prop}
\label{troispartiesdiv}
Let $k\geq g$. The inflection divisor $\text{Inf}(|\mathcal{L}_\mathbb{R}(kD)|)$ is the sum of two effective divisors with disjoint supports 
\[
\text{Inf}(|\mathcal{L}_\mathbb{R}(kD)|=R+S
\]
where $R$ is supported on the ramification locus of $\pi$ and $S$ is $G$-invariant. If $k=g$, then $S=0$.
\end{prop}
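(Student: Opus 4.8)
The plan is to decompose the inflection divisor $\text{Inf}(|\mathcal{L}_\mathbb{R}(kD)|)$ by isolating its part supported on the ramification locus $R_\pi$ of $\pi$, and to show that what remains is invariant under the group $G=\langle\sigma_X,\sigma_h\rangle$. Recall from \eqref{HyperellipticCurve} that the branch points of $\pi$ in the affine chart are the roots of $f$, namely $0$, the $p_i$, and the conjugate pairs $q_j,\overline{q_j}$; together with $\infty$ these form the support of $R_\pi$. Each such ramification point is a fixed point of $\sigma_h$, and the distinguished point $\infty$ is also fixed by $\sigma_X$. The starting point is the explicit description \eqref{InflectionDivisor}, which writes $\text{Inf}(|\mathcal{L}_\mathbb{R}(kD)|)=\text{div}_U(\alpha)+m\cdot\infty$ for an explicit $\alpha\in\mathbb{R}[x,y]$ coming from the Wronskian of the basis $\mathcal{F}$.

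First I would examine the symmetry of the Wronskian under $\sigma_h$. Since $\sigma_h$ acts on $K(X)$ by $y\mapsto -y$ and fixes $x$, and the basis $\mathcal{F}$ consists of monomials $x^i$ and $x^{i-k-1}y$, the function $\alpha$ transforms in a controlled way under $\sigma_h^*$; because $\text{div Wr}(\mathcal{F})$ is intrinsic to the linear series and $\sigma_h$ is an automorphism of $X$ preserving $[kD]$ (as $[D]=[2\cdot\infty]$ is pulled back from $\mathbb{P}^1$ and hence $\sigma_h$-invariant), the inflection divisor itself is $\sigma_h$-invariant; it is $\sigma_X$-invariant by construction. Thus $\text{Inf}(|\mathcal{L}_\mathbb{R}(kD)|)$ is invariant under all of $G$. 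I would then \emph{define} $R$ to be the part of this divisor supported on the ramification locus and $S$ to be the complementary part supported away from it. Because $G$ permutes the points of $X$ while preserving the ramification locus (every element fixes it setwise), and because the whole divisor is $G$-invariant, both $R$ and $S$ inherit $G$-invariance; in particular $S$ is $G$-invariant, and both are effective since they are sub-divisors of an effective divisor with disjoint supports by construction.

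The main obstacle, and the crux of the argument, is the final assertion that $S=0$ when $k=g$. Here I would use the degree bookkeeping. When $k=g$ the series $|\mathcal{L}_\mathbb{R}(gD)|$ is the $g^g_{2g}$ spanned by $1,x,\dots,x^g$, whose total inflectionary weight is $w_\mathbb{C}(g,g)=g(g+1)^2$ by the Plücker count recorded in the excerpt. The companion result quoted before the statement (that each of the $2g+2$ ramification points carries inflectionary weight $\binom{g+1}{2}$) would then force $\deg R\geq (2g+2)\binom{g+1}{2}=g(g+1)^2$, which already accounts for the entire weight; hence $\deg S=0$ and, by effectivity, $S=0$. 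The delicate point is verifying that the ramification weight is \emph{exactly} $\binom{g+1}{2}$ at each such point for $k=g$ — this is a local vanishing-order computation for the sections $x^i$ in the local parameter at a branch point (where $x$ has a double zero/pole), so that the vanishing sequence is $(0,2,4,\dots,2g)$ and the weight is $\sum_{i=0}^{g}(2i-i)=\binom{g+1}{2}$. I would carry out this local computation explicitly, and then invoke the fact that the total weight is already saturated to conclude that no inflection occurs outside the ramification locus.
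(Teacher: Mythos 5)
Your proof is correct, but it takes a genuinely different route from the paper's. For the $G$-invariance, the paper argues computationally: writing $\alpha|_{U_y}$ as the determinant \eqref{alpha_in_U}, it shows by induction that $(x^iy)^{(j)}=P_{i,j}(x)y$ with $P_{i,j}\in\mathbb{R}(x)$, hence $\alpha|_{U_y}=Q(x)y^{k-g}$ for some real rational $Q$, and then checks $G$-invariance of $S$ by lifting the points of $\mathrm{div}_{U_f}(Q)$ explicitly through the double cover (each real $p$ lifts to a $\sigma_h$-orbit, each conjugate pair $q+\overline{q}$ to a full $G$-orbit). You instead prove the stronger statement that the \emph{entire} inflection divisor is $G$-invariant, by the intrinsic functoriality of inflection divisors under automorphisms preserving the complete series: since $\deg f=2g+1$, the point $\infty$ is a ramification point fixed by $\sigma_h$, so $\sigma_h^*(kD)=kD$ and $\sigma_h^*$ preserves $H^0(\mathcal{L}(kD))$; splitting by support (the ramification locus being $G$-stable) then yields $G$-invariant $R$ and $S$ at once. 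This is shorter, basis-free, and avoids the Wronskian manipulation — though note the paper's explicit identity $\alpha|_{U_y}=Q(x)y^{k-g}$ is not wasted effort: it is reused later in the specialization analysis of Section~\ref{Section_Specialization}, where $S=\mathrm{div}_{\mathcal{X}_{\mathbb{K}}^\circ}(\alpha|_{\mathcal{X}_{\mathbb{K}}^\circ})$ is computed from \eqref{alpha_in_U}. For $S=0$ when $k=g$, the paper has a one-line argument: the basis is $\{1,x,\dots,x^g\}$, so $\alpha|_{U_y}=\det\bigl((x^i)^{(j)}\bigr)$ is a nonzero constant and $S$ vanishes identically. Your degree-saturation argument — each of the $2g+2$ ramification points has vanishing sequence $(0,2,4,\dots,2g)$ (since $\mathrm{ord}_p(x-c)=2$ there), hence weight $\binom{g+1}{2}$, and $(2g+2)\binom{g+1}{2}=g(g+1)^2$ exhausts the total Pl\"ucker weight — is also valid, and in fact only the lower bound on the local weight is needed, so the "delicate point" you flag is harmless; this computation essentially anticipates the paper's later Theorem~\ref{inflection_along_ramification_divisor}, where the same vanishing-order analysis is carried out for all $k\geq g$. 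The trade-off: your route is more conceptual and proves more ($G$-invariance of the whole divisor), while the paper's explicit route produces formulas that downstream sections depend on.
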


\begin{proof}
Let $\alpha$ be 
as in \eqref{InflectionDivisor}. Since $\tfrac{\partial(y^2-f)}{\partial y}\,=\,2y$, it follows
that in the open subset $U_y\,:=\,U\setminus V(y)$ the restriction $\alpha|_{U_y}$ is given by $\alpha|_{U_y}\,=\,\text{det}(f_i^{(j)})_{0\leq i,j\leq 2k-g}$, where $f_i^{(j)}\,=\,
\tfrac{\partial^jf_i}{\partial x^j}$. Note that $\text{div}_U(\alpha)\,=\,
\text{div}_{U_y}(\alpha|_{U_y})+R'$, where $R'$ is a divisor supported on the closed subset
$V(y)\cap U$. We need to show that $S=\text{div}_{U_y}(\alpha|_{U_y})$ is $G$-invariant.

\medskip
The simplest situation occurs when $k\,=\,g$. In that case $S=0$ since $\alpha|_{U_y}\,=\,1$, and it follows that
\[
\text{Inf}(|\mathcal{L}_\mathbb{R}(gD)|)\,=\,R'+m\cdot\infty\, .
\]
Now assume that $k>g$. We then have 
\begin{equation}
\label{alpha_in_U}
\alpha|_{U_y}\,=\,\text{det}\left(\begin{array}{cccc}
 (x^0y)^{(k+1)}& (x^0y)^{(k+2)} &\cdots &(x^0y)^{(2k-g)}\\
 (x^1y)^{(k+1)}& (x^1y)^{(k+2)} &\cdots &(x^1y)^{(2k-g)}\\
 &&&\\
 (x^{k-g-1}y)^{(k+1)}& (x^{k-g-1}y)^{(k+2)} &\cdots &(x^{k-g-1}y)^{(2k-g)}\\
\end{array}\right)
\end{equation}
which is a square $(k-g) \times (k-g)$ matrix. In the open set $U_y$ we have 
\[
y^{\pr}\,=\,\tfrac{f'}{2y}\,=\,\left(\tfrac{f'}{2f}\right)y\,:=\,P_{0,1}(x)y
\]
and it follows by induction that for $j\,>\,1$ we have $y^{(j)}\,=\,P_{0,j}(x)y$ for some $P_{0,j}(x)
\,\in\,\mathbb{R}(x)$. It follows that for $i,\,j\,\geq\,0$,
we have $(x^iy)^{(j)}\,=\,P_{i,j}(x)y$ for some $P_{i,j}(x)\,\in\,\mathbb{R}(x)$. The upshot is
that there exists some $Q(x)\,\in\,\mathbb{R}(x)$ for which 
\begin{equation*}
\alpha|_{U_y}\,=\,Q(x)y^{k-g}\, .
\end{equation*}
Since $y\neq0$ on $U_y$, it follows that the divisor $\text{div}_{U_y}(\alpha|_{U_y})$ is determined
by the divisor $\text{div}_{U_f}(Q)$ of $Q$, which is
a real regular function on the open set %$U_f\,:=\,\mathbb{C}\setminus{V(f)}\,\subset\, \mathbb{C}$.
$U_f\,:=\,\mathbb{C}\setminus{V(f)}$.

\medskip
Now suppose that 
\[
\text{div}_{U_f}(Q)\,=\,\sum_{p\neq p_i}n_p\cdot p+\sum_{q\neq q_j}n_q\cdot (q+\overline{q})\,;\]
then each $p$ lifts to $(p,\sqrt{f(p)})+\sigma_h((p,\sqrt{f(p)}))$ on $\text{div}_{U_y}(\alpha|_{U_y})$, while each
$q+\overline{q}$ lifts to $$(q,\sqrt{f(q)})+\sigma_X(q,\sqrt{f(q)})+\sigma_h(q,\sqrt{f(q)})+
\sigma_h\sigma_X(q,\sqrt{f(q)})\, .$$ It follows immediately that $\text{div}_U(\alpha|_U)$ is $G$-invariant.
\end{proof}
 
\begin{thm}
 \label{realpartcaseg}
 When $1\leq k \leq g$, the inflection divisor $\text{Inf}(|\mathcal{L}_\mathbb{R}(kD)|)$ is $\binom{k+1}{2}$ times the ramification divisor of $\pi$. In particular, the linear series $|\mathcal{L}_\mathbb{R}(kD)|$ has $k(k+1)n(X)$ real inflection points.
 \end{thm}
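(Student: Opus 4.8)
The plan is to reduce the theorem to Proposition~\ref{troispartiesdiv} together with a degree count, handling the case $k=g$ and the general case $1\le k\le g$ somewhat differently because the explicit basis $\mathcal{F}$ was only written down for $k\ge g$. First I would treat $k=g$ directly. Here Proposition~\ref{troispartiesdiv} already gives $\text{Inf}(|\mathcal{L}_\mathbb{R}(gD)|)=R'+m\cdot\infty$ with $S=0$, so the inflection divisor is supported entirely on the ramification locus of $\pi$ (the finite branch points together with $\infty$). Since the total degree of the inflection divisor is $g(2k-g+1)^2$, which at $k=g$ equals $g(g+1)^2$, and the ramification divisor $R_\pi$ has degree $2g+2$, I would compare these numbers against the claimed multiplicity $\binom{g+1}{2}=\frac{g(g+1)}{2}$; indeed $\binom{g+1}{2}\cdot(2g+2)=g(g+1)^2$, confirming consistency. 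The real content is to show the weight is \emph{equal} at every ramification point, i.e. that $R'$ is exactly $\binom{g+1}{2}$ times each branch point.

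For $1\le k\le g$ in general, I would argue that the hyperelliptic involution $\sigma_h$ acts on $H^0(\mathcal{L}_\mathbb{R}(kD))$ and, because $kD$ is a multiple of the $g^1_2$, this space is spanned by $\sigma_h$-invariant functions $x^i$, so the whole linear series is pulled back from $\mathbb{P}^1$ via $\pi$. The key local computation is then: at a ramification point $P$ of $\pi$ (where $x$ is a local coordinate downstairs but $x-x(P)$ vanishes to order $2$ upstairs), the pulled-back sections $\pi^*(\text{monomials in } x)$ acquire a predictable jump in vanishing orders. Concretely, the vanishing sequence of $|\mathcal{L}_\mathbb{R}(kD)|$ at $P$ should be $\{0,2,4,\dots,2k\}$ truncated to the rank, and the inflectionary weight is the sum of $(a_i-i)$ over the vanishing sequence. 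I would compute this sum to be $\binom{k+1}{2}$ at each of the $2g+2$ ramification points, so that the total inflectionary degree concentrated on $R_\pi$ is $\binom{k+1}{2}(2g+2)=k(k+1)(g+1)=w_\mathbb{C}(g,k)$, matching the global Plücker degree and forcing the inflection divisor to be supported exactly on $R_\pi$ with the uniform multiplicity $\binom{k+1}{2}$.

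The main obstacle I expect is the local vanishing-order computation at the ramification points, and in particular justifying that the inflection divisor has \emph{no} support away from the ramification locus. The clean way to see this is the degree-counting closure argument: since each ramification point already contributes weight $\binom{k+1}{2}$, and there are $2g+2=2(g+1)$ of them, their total contribution $\binom{k+1}{2}\cdot 2(g+1)=k(k+1)(g+1)$ exhausts the full degree $w_\mathbb{C}(g,k)$ of the (effective) inflection divisor computed by the Plücker formula. Effectivity then forces equality $\text{Inf}(|\mathcal{L}_\mathbb{R}(kD)|)=\binom{k+1}{2}R_\pi$ with no residual terms. Finally, to extract the count of \emph{real} inflection points, I would invoke the topological description from Remark~\ref{comp}: the real branch points of $\pi$ are distributed $2$ per real connected component across the $n(X)$ components $S_1,\dots,S_{n(X)}$, giving $2n(X)$ real ramification points and hence $k(k+1)n(X)$ real inflection points when each is counted without multiplicity, which is the stated conclusion.
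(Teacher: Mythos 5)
Your proposal is correct, but it takes a genuinely different route from the paper. The paper's proof is an explicit Wronskian computation: working on the open set $U_x$ where $y$ is a local coordinate, it performs row reduction on the matrix $(\mathfrak{D}^j(x^i))_{1\leq i,j\leq k}$ of $y$-derivatives to show $\alpha|_{U_x}=a(x')^{\binom{k+1}{2}}$ with $a\in\mathbb{R}\setminus\{0\}$, and then reads off the multiplicity $\binom{k+1}{2}$ at each branch point from $x'=\tfrac{2}{f'}y$; Proposition~\ref{troispartiesdiv} supplies the fact that the support lies on the ramification locus. You instead observe that for $k\leq g$ the series is pulled back from $\mathbb{P}^1$, compute the vanishing sequence $(0,2,4,\dots,2k)$ at each ramification point directly from the basis $(x-p)^i$ (respectively from pole orders at $\infty$), obtain weight $\binom{k+1}{2}$ pointwise, and then close by degree exhaustion: the $2g+2$ ramification points already account for the full Pl\"ucker degree $k(k+1)(g+1)$, so effectivity forces $\text{Inf}(|\mathcal{L}_\mathbb{R}(kD)|)=\binom{k+1}{2}R_\pi$ with no residual support --- which makes Proposition~\ref{troispartiesdiv} unnecessary for this theorem. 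Your approach is in fact the same vanishing-order technique the paper itself deploys later, in Theorem~\ref{inflection_along_ramification_divisor}, for the regime $k\geq g$; applying it here is arguably cleaner and more conceptual than the paper's determinant manipulation, at the modest cost of invoking the global degree formula rather than exhibiting $\alpha$ explicitly. One small slip in wording: in your final sentence the count $k(k+1)n(X)=2n(X)\cdot\binom{k+1}{2}$ is the real degree of the inflection divisor, i.e.\ the $2n(X)$ real ramification points counted \emph{with} their inflectionary weights, not ``without multiplicity'' as you wrote; without multiplicity there are only $2n(X)$ distinct real inflection points.
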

 
\begin{proof}
To simplify the exposition we focus on the case $k\,=\,g$; the extension to the seemingly more 
general case $k \,\leq\, g$ is easy, and will be described at the end. We saw in Proposition 
\ref{troispartiesdiv} that $\text{Inf}(|\mathcal{L}_\mathbb{R}(gD)|)$ is supported along the 
ramification locus of $\pi$. Let $\alpha$ be as in \eqref{InflectionDivisor}. Since 
$\tfrac{\partial(y^2-f)}{\partial x}\,=\,f^{\pr}(x)$, it follows that on the open subset 
$U_x\,=\,U\setminus\{(p,\pm\sqrt{f(p)})\::\:f^{\pr}(p)=0\}$ the restriction $\alpha|_{U_x}$ is given by $\alpha|_{U_x}\,=\,\text{det}(f_i^{(j)})_{0\leq i,j\leq g}$, where 
$f_i^{(j)}\,=\,\tfrac{\partial^jf_i}{\partial y^j}$.
 
 \medskip
Write $\mf{D}^j\,=\,\tfrac{\partial^j}{\partial y^j}$; then, since $f_0\,=\,1$, we have 
$\alpha|_{U_x}\,=\,\text{det}(\mc{D}^j(x^i))_{1\leq i,j\leq g}$. Let $R_2\,=\,(\mf{D}^j(x^2))_{1\leq 
j\leq g}$ be the vector corresponding to the second row of the matrix $(\mf{D}^j(x^i))_{1\leq 
i,j\leq g}$. It may be written as $2x\cdot R_1+R_2^{(1)}$, where 
$R_2^{(1)}\,=\,(r_{2,j}^{(1)})_{1\leq j\leq g}$ is the vector with entries $r_{2,1}^{(1)}\,=\,0$ and 
$r_{2,j}^{(1)}\,=\,2\sum_{k=1}^{j-1}\binom{j-1}{k}\mf{D}^k(x)\mf{D}^{j-k}(x)$ for $j\,\geq\,2$. In 
particular, we have $r_{2,2}^{(1)}\,=\,2(\mf{D}^1(x))^2$.
 
 \medskip
 Similarly, let $R_3\,=\,(\mf{D}^j(x^3))_{1\leq j\leq g}$ be the third row. It may be written as
$3x^2\cdot R_1+R_3^{(1)}$, where $R_3^{(1)}=(r_{3,j}^{(1)})_{1\leq j\leq g}$ is the vector with entries
$$r_{3,1}^{(1)}\,=\,0 \ \ \text{and}\ \ r_{3,j}^{(1)}
\,=\,3\sum_{k=1}^{j-1}\binom{j-1}{k}\mf{D}^k(x^2)\mf{D}^{j-k}(x)\, \ \text{ for } \ j\,\geq\,2\, .$$
This time we have $R_3^{(1)}=3xR_2^{(1)}+R_3^{(2)}$, where $r_{3,1}^{(2)}=r_{3,2}^{(2)}=0$ and
$$r_{3,j}^{(2)}\,=\,6\sum_{k=2}^{j-1}\sum_{\ell=1}^{k-1}\binom{j-1}{k}\binom{k-1}{\ell}\mf{D}^{j-k}
(x)\mf{D}^{k-\ell}(x)\mf{D}^\ell(x)$$ for $j\,\geq\,3$. So $R_3\,=\,3x^2\cdot R_1+3xR_2^{(1)}+R_3^{(2)}$, with
$r_{3,3}^{(2)}\,=\,6(\mf{D}^1(x))^3$.
 
 \medskip
 In general, we may write the $i$-th row $R_i$ as a linear combination
\[
\alpha_{i,1}R_1+\alpha_{i,2}R_2^{(1)}+\cdots+\alpha_{i,i-1}R_{i-1}^{(i-2)}+R_i^{(i-1)}
\]
in which the vector $R_i^{(i-1)}\,=\,(r_{i,j}^{(i-1)})$ satisfies $r_{i,j}^{(i-1)}\,=\,0$ for
$1\,\leq\, j\,\leq\, i-1$ and $r_{i,i}^{(i-1)}\,=\,\alpha_{i,i}(\mf{D}^1(x))^i$ for some
$\alpha_{i,i}\,\in\,\mathbb{R}\setminus\{0\}$.
 
 \medskip
 Consequently, we have 
 \[
 \alpha|_{U_x}\,=\,\text{det}((x^i)^{(j)})_{1\leq i,j\leq g}\,=\,a(x^{\pr})^{\binom{g+1}{2}}
 \]
for some $a\in\mathbb{R}\setminus\{0\}$, and the theorem for $k\,=\,g$ follows because
$x^{\pr}\,=\,\tfrac{2}{f^{\pr}}y$ on $U_x$.
 
 \medskip
Finally, if $1\,\leq\, k \,\leq\, g$, then $H^0(\mathcal{L}_\mathbb{R}(kD))$ is spanned by
$\{x^0,\ldots,x^k\}$, and $\text{Inf}(|\mathcal{L}_\mathbb{R}(kD)|)$ is computed exactly as
above. In particular, we have
\[
 \alpha|_{U_x}\,=\, a(x^{\pr})^{\binom{k+1}{2}}
\]
for some $a\,\in\,\mathbb{R}\setminus\{0\}$. This completes the proof.
\end{proof}
 
\begin{rem}
When $X$ is hyperelliptic, the inflection divisor of its canonical series is
$\binom{g}{2}$ times the ramification divisor of $\pi$ \cite[p.~274]{GriH}. This is in agreement
with our result, as $[K_X]\,=\,(g-1)[D]$. In Theorem~\ref{inflection_along_ramification_divisor}, we
will prove a more precise version of Proposition~\ref{troispartiesdiv} when $k\geq g$ via a local analysis of vanishing orders
of sections of $|\mc{L}_{\mb{R}}(D)|$.
\end{rem}

\begin{rem}
When $k\,>\,g$, we have
 \[
 \alpha|_{U_y}\,=\,\text{det}\biggl((k+j)!\frac{y^{(k+1+j-i)}}{(k+1+j-i)!}\biggr)_{1\leq i,j\leq k-g}
 \]
 and
 \[
 \alpha|_{U_x}\,=\,\text{det}(a_{i,j})_{1\leq i,j\leq 2k-g-1}\, ,
 \]
 where 
 \begin{equation*}
 a_{i,j}\,=
 \begin{cases}
 (x^i)^{(j+1)},&\text{ if }1\leq i\leq k,\\
 (j+1)(x^{i-k})^{(j)},&\text{ if }k+1\leq i\leq 2k-g-1.\\
 \end{cases}
 \end{equation*}
Note that $\alpha|_U$ is in fact the determinant of a square Toeplitz matrix. It would be interesting to identify these functions explicitly.
 \end{rem}
 
 We end this section by commenting on the case $g=1$, which was resolved in \cite{G} (see also Section \ref{Combinatorial_construction}). In this situation, the real components of the hyperelliptic curve defined by (compactifying) \eqref{HyperellipticCurve}, along with the real inflection points of $|\mathcal{L}_\mathbb{R}(kD)|$, obey the following dichotomy. 
 \begin{enumerate}
 \item[1.] $n(X)\,=\,1$ if and only if 
 $f$ has conjugate non-real roots $q_1, \overline{q_1} \,\in\,\mathbb{C}\setminus\mathbb{R}$. In this situation, the inflection divisor $\text{Inf}(|\mathcal{L}_\mathbb{R}(kD)|)$ has $2k$ (distinct) real points.
 \item[2.] $n(X)\,=\,2$ if and only if $f$ has distinct roots $p_1,p_2 \,\in\,\mathbb{R}^*$. Since the parity vector $c(kD)$ of $kD$ is $(0,0)$, the inflection divisor $\text{Inf}(|\mathcal{L}_\mathbb{R}(kD)|)$ has $2k$ real points on each connected component of $X(\mathbb{R})$.
 \end{enumerate}
 
 In the remainder of this work we will focus on the case $k>g>1$.

\section{Enhancing Viro's patchworking construction}
\label{Jonsson-Viro}
Viro's patchworking method is a tool for constructing real plane algebraic curves with controlled topology. In this section we will apply an enhanced version of this method to construct useful one-parameter families of real affine plane hyperelliptic curves of the form \eqref{HyperellipticCurve}.

\medskip
Any such family may be viewed as a plane curve defined over a non-Archimedean field, and so is associated with a subdivision of a lattice triangle. More precisely, for a fixed choice of $g\geq2$, let $\Delta \sub \mathbb{R}^2$ denote the lattice triangle $\Delta=\text{Conv}\{(0,2),(1,0),(2g+1,0)\}$. Let $\Theta$ denote the  subdivision of $\Delta$ whose 2-dimensional faces are the triangles 
\begin{equation}
\Theta_j\,=\,\text{Conv}\{(0,2),(2j-1,0),(2j+1,0),\:j=1,\ldots,g.\}
\end{equation}

\medskip
%Let us consider the interval $[1,2g+1]$. 
This subdivision is regular and we can construct it using a function $\nu:[1,2g+1]\cap\mathbb{Z}\longrightarrow \mathbb{Z}_{\geq0}\cup\{\infty\}$ in such a way that the convex hull of its graph induces the regular subdivision $\{[2j-1,2j+1]\::\:j=1,\ldots,g\}$ on the interval $[1,2g+1]$, then the convex hull of the set $\{(0,2,0),(i,0,\nu(i)),i=1,\ldots,2g+1\}$ induces the  subdivision $\Theta$ on $\Delta$. We use these values to define the following patchworking polynomial 
\begin{equation}\label{pre_patchworked_curve}
y^2-f:=t^0y^2-\sum_{i=1}^{2g+1}a_{i}t^{\nu(i)}x^i\in \mathbb{R}[t^{\pm1}][x,y]
\end{equation}
where $a_i=0$ if and only if  $\nu(i)=\infty$. In particular we have $a_{2i-1}\neq0$ for $i=1,\ldots,g+1$.

\medskip
Our present aim is to show how any family of hyperelliptic curves embedded as a hypersurface in a 
toric variety as in \eqref{pre_patchworked_curve} naturally has an associated limit object in the 
category of the {\it metrized complexes} studied in \cite{AB}. To do so, we adapt Berkovich's 
construction \cite{Be} of a {\it hybrid} family of topological spaces that interpolates between Archimedean 
and non-Archimedean analytifications of a given complex algebraic variety. Our presentation 
follows closely that of Jonsson \cite{J}.

\begin{rem}
In this section, when treating families of {\it algebraic} schemes, we work over the non-Archimedean field $\mathbb{K}=\mathbb{C}((t))^\text{alg}$ endowed with the $t$-adic norm $v_t$ normalized to satisfy $v_t(t)=e^{-1}$. %It should be noted that 
In every {\it Berkovich-analytic} argument we make we work over the completion $\widehat{\mathbb{K}}$, which remains algebraically closed. However, in the interest of not overburdening notation we will continue to write $\mb{K}$ in place of $\widehat{\mb{K}}$ in these cases.
\end{rem}

\medskip
To begin, set $\mathcal{U}:=\text{Spec}(\mathbb{R}[t^{\pm1}][x,y]/(y^2-f)\otimes_{\mathbb{R}}\mathbb{C})$ and $\mathbb{G}_m:=\text{Spec}(\mathbb{R}[t^{\pm1}]\otimes_{\mathbb{R}}\mathbb{C})$. The inclusion of rings
\[
\mathbb{R}[t^{\pm1}]\otimes_{\mathbb{R}}\mathbb{C}\hookrightarrow \mathbb{R}[t^{\pm1}][x,y]/(y^2-f)\otimes_{\mathbb{R}}\mathbb{C}
\]
induces a surjective real morphism $p\,:\,\mathcal{U}\,\longrightarrow\,\mathbb{G}_m$ of real algebraic varieties. Let 
\[
%\mathcal{U}_{\mathbb{C}((t))}\,:=\,\mathcal{U}\times_{\mathbb{G}_m}\mathbb{C}((t))
\mathcal{U}_{\mb{K}}\,:=\,\mathcal{U}\times_{\mathbb{G}_m}\mb{K}
%\,=\,\text{Spec}(\mathbb{C}((t))[x,y]/(y^2-f))
\,=\,\text{Spec}(\mb{K}[x,y]/(y^2-f))
\]
%denote the $\mathbb{C}((t))$-curve obtained from the $\mb{G}_m$-surface $\mc{U}$ via the obvious base change.
denote the $\mb{K}$-curve obtained from the $\mb{G}_m$-surface $\mc{U}$ via the obvious base change.

\medskip 
Now let $v_0,\,v_\infty\,:\,\mathbb{C}\,\longrightarrow\,\mathbb{R}_{\geq 0}$ denote the trivial
and the Archimedean norms of $\mathbb{C}$ respectively, and consider the function 
$v_\text{hyb}\,:\,\mathbb{C}\,\longrightarrow\,\mathbb{R}_{\geq0}$ defined by 
\[
z\,\longmapsto\,\text{max}\{v_0(z),v_\infty(z)\}\, .
\]
This is a sub-multiplicative norm on $\mathbb{C}$ and $(\mathbb{C},v_\text{hyb})$ is a Banach
ring; its Berkovich spectrum $\mathcal{M}(\mathbb{C},v_\text{hyb})$ is thus well-defined, and is in
fact homeomorphic to the unit interval $[0,\,1]$. Further, there is an analytification
functor $\text{an}(-,v_\text{hyb})$ from the category of complex algebraic varieties to the category
of $\mathcal{M}(\mathbb{C},v_\text{hyb})$-analytic spaces.
 
\medskip
Let $\text{an}(\mathcal{U},v_\text{hyb})$ and $\text{an}(\mathbb{G}_m,v_\text{hyb})$ be the analytification of $\mathcal{U}$ and $\mathbb{G}_m$ with respect to $v_\text{hyb}$. Let 
\[
 \mathcal{U}^\#\,:=\,\{\rho\in \text{an}(\mathcal{U},v_\text{hyb})\::\:\rho(t)\,=\,e^{-1}\}
 \]
 and 
 \[
 \mathbb{G}_m^\#:=\{\rho\in \text{an}(\mathbb{G}_m,v_\text{hyb})\::\:\rho(t)=e^{-1}\}.
 \]
 Note that as a topological space, $\mathbb{G}_m^\#$ is precisely
 the Archimedean closed disk $D_{e^{-1}}\,=\,\{z\in\mathbb{C}\::\:v_\infty(z)\leq e^{-1}\}$.
 
 \medskip
 The fiberwise behavior of the restriction $p^\#\,:\,\mathcal{U}^\#\,\longrightarrow\,
D_{e^{-1}}$ to $\mathcal{U}^\#$ of the hybrid analytification
 \[
 \text{an}(p,v_\text{hyb})\,:\,\text{an}(\mathcal{U},v_\text{hyb})\,\longrightarrow\,\text{an}(\mathbb{G}_m,v_\text{hyb})
 \]
 of the projection $p\,:\,\mathcal{U}\,\longrightarrow\,\mathbb{G}_m$ is explained by the following dichotomy.

\begin{enumerate}
\item[1.] The fiber $\mathcal{U}^\#_\varepsilon$ of $p^\#$ over $\varepsilon\,\in\, D_{e^{-1}}\cap\mathbb{R}_{>0}$ is homeomorphic to the fiber above $t=\varepsilon$ of the morphism $\text{an}(p,v_\infty)$, which is none other than the usual holomorphic analytification. In other words, $\mathcal{U}^\#_\varepsilon$ is the real plane curve $V(y^2-f_\varepsilon)$, where $f_\varepsilon=f|_{t=\varepsilon}$,

\item[2.] The fiber $\mathcal{U}^\#_0$ of $p^\#$ over $0\,\in\, D_{e^{-1}}$ is $\text{an}(\mathcal{U}_{\mathbb{C}((t))},v_t)$, the analytification of the non-Archimedean plane curve $V(y^2-f)\subset\mathbb{C}((t))^2$ with respect to $v_t$, the $t$-adic norm normalized to satisfy $v_t(t)=e^{-1}$. Note that there is a natural inclusion of $\text{an}(\mathcal{U}_{\mathbb{C}((t))},v_t)$ in $\text{an}(\mathcal{U}_{\mb{K}},v_t)$ induced by the inclusion of $(\mb{C}((t)),v_t)$ in $(\mb{K},v_t)$.
\end{enumerate}
Moreover, the map $p^\#$ is open over the Archimedean closed disk $D_{\delta}$ whenever $0 \leq \delta \ll 1$.

\medskip
From a practical point of view, the map $p^\#$ will allow us to relate inflection divisors of complex hyperelliptic curves with inflection divisors of non-Archimedean hyperelliptic curves over the field of Puiseux series $\mb{K}$ with complex coefficients. %Accordingly, let $\mathbb{K}=\mathbb{C}((t))^\text{alg}$ denote the Puiseux field. 
%Note that the non-Archimedean plane curve $\mathcal{U}_{\mathbb{C}((t))}(\mathbb{K})\subset \mathbb{K}^2$ is smooth if and only if $f$ has $2g+1$ distinct roots in $\mathbb{K}$. 
Note that the non-Archimedean plane curve $\mathcal{U}_{\mb{K}}(\mathbb{K})\subset \mathbb{K}^2$ is smooth if and only if $f$ has $2g+1$ distinct roots in $\mathbb{K}$.
%Suppose that this is the case, and let $\mathcal{X}_{\mathbb{C}((t))}$ denote the compactification $\mathcal{U}_{\mathbb{C}((t))}(\mathbb{K})\cup\{\infty\}$ in $\mathbb{KP}^1\times\mathbb{KP}^1$. It follows that $\mathcal{X}_{\mathbb{C}((t))}$ is a hyperelliptic curve of genus $g$ over $\mathbb{K}$, with associated two-sheeted cover $\pi:\mathcal{X}_{\mathbb{C}((t))}\longrightarrow\mathbb{KP}^1$. 
Suppose that this is the case, and let $\mathcal{X}_{\mb{K}}$ denote the compactification $\mathcal{U}_{\mb{K}}(\mathbb{K})\cup\{\infty\}$ in $\mathbb{KP}^1\times\mathbb{KP}^1$. It follows that $\mathcal{X}_{\mb{K}}$ is a hyperelliptic curve of genus $g$ over $\mathbb{K}$, with associated two-sheeted cover $\pi:\mathcal{X}_{\mb{K}}\longrightarrow\mathbb{KP}^1$.

\medskip
The analysis of (individual) complex curves carried out in Section \ref{RealHyperCurves} remains valid in this context. 
%Namely, consider the divisor $D\,=\,2\cdot\infty$ on $\mathcal{X}_{\mathbb{C}((t))}$. 
Namely, consider the divisor $D\,=\,2\cdot\infty$ on $\mathcal{X}_{\mb{K}}$. 
%Just as before, the complete linear series $|\mathcal{L}(kD)|$ on $\mathcal{X}_{\mathbb{C}((t))}$ for $k>g$ has a basis of global sections $\mathcal{F}=\{\phi_{0},\ldots,\phi_{2k-g}\}\subset \mathbb{R}[t^{\pm1}][x,y]$ defined by
Just as before, the complete linear series $|\mathcal{L}(kD)|$ on $\mathcal{X}_{\mb{K}}$ for $k>g$ has a basis of global sections $\mathcal{F}=\{\phi_{0},\ldots,\phi_{2k-g}\}\subset \mathbb{R}[t^{\pm1}][x,y]$ defined by
\begin{equation}
\label{nafamily}
\phi_i\,=\,x^i\text{ for }0\leq i\leq k\text{, and } \phi_{i}\,=\,x^{i-k-1}y\text{ for }k+1\leq i\leq 2k-g.
\end{equation}

\medskip
By computing $\text{div Wr}(\mathcal{F})$, we see that there is some $\alpha\in \mathbb{R}[t^{\pm1}][x,y]$ for which
\begin{equation}
\label{InflectionDivisoroverK}
%\text{Inf}(|\mathcal{L}(kD)|)=\text{div}_{\mathcal{U}_{\mathbb{C}((t))}}(\alpha)+m\cdot\infty=[\mathcal{U}_{\mathbb{C}((t))}\cap V(\alpha)]+m\cdot\infty
\text{Inf}(|\mathcal{L}(kD)|)=\text{div}_{\mathcal{U}_{\mathbb{K}}}(\alpha)+m\cdot\infty=[\mathcal{U}_{\mathbb{K}}\cap V(\alpha)]+m\cdot\infty
\end{equation}
%where $m\,=\,g(2k-g+1)^2-\text{deg div}_{\mathcal{U}_{\mathbb{C}((t))}}(\alpha)\geq0$, and $[\mathcal{U}_{\mathbb{C}((t))}\cap V(\alpha)]$ is the divisor associated to the intersection scheme $\mathcal{U}_{\mathbb{C}((t))}\cap V(\alpha)$ in $\mathbb{K}^2$.
where $m\,=\,g(2k-g+1)^2-\text{deg div}_{\mathcal{U}_{\mb{K}}}(\alpha)\geq0$, and $[\mathcal{U}_{\mb{K}} \cap V(\alpha)]$ is the divisor associated to the intersection scheme $\mathcal{U}_{\mb{K}} \cap V(\alpha)$ in $\mathbb{K}^2$.

\medskip
%We will now analyze the $v_\text{hyb}$-analytification of the intersection scheme $\mathcal{U}_{\mathbb{C}((t))}\cap V(\alpha)$ on $\mathbb{K}^2$, which is the closed subscheme defined by the ideal $(y^2-f,\alpha)$.
We will now analyze the $v_\text{hyb}$-analytification of the intersection scheme $\mathcal{U}_{\mb{K}}\cap V(\alpha)$ on $\mathbb{K}^2$, which is the closed subscheme defined by the ideal $(y^2-f,\alpha)$.
To this end, set 
\[
\mathcal{Z}\,:=\,\text{Spec}(\mathbb{R}[t^{\pm1}][x,y]/(y^2-f,\alpha)\otimes_{\mathbb{R}}\mathbb{C})\, ;
\]
this is a real algebraic variety, 
canonically equipped with a surjective real morphism $q\,:\,\mathcal{Z}\,\longrightarrow\,\mathbb{G}_m$ between real algebraic varieties. Let 
\[
%\mathcal{Z}_{\mathbb{C}((t))}\,:=\,\mathcal{Z}\times_{\mathbb{G}_m}\mathbb{C}((t))\,=\,\text{Spec}(\mathbb{C}((t))[x,y]/(y^2-f,\alpha))
\mathcal{Z}_{\mb{K}}\,:=\,\mathcal{Z}\times_{\mathbb{G}_m}\mb{K}\,=\,\text{Spec}(\mb{K}[x,y]/(y^2-f,\alpha))
\]
%denote the $\mathbb{C}((t))$-curve associated to $\mc{Z}$ by carrying out the obvious base change.
denote the $\mb{K}$-curve associated to $\mc{Z}$ by carrying out the obvious base change.

\medskip
According to our discussion above, the induced map $q^\#\,:\,\mathcal{Z}^\#\,\longrightarrow\,
D_{e^{-1}}$ has the following properties.

\begin{enumerate}
\item[1.] The fiber $\mathcal{Z}^\#_\varepsilon$ of $p^\#$ over $\varepsilon\,\in\, D_{e^{-1}}\cap\mathbb{R}_{>0}$ is 
the 0-dimensional scheme $V(y^2-f_\varepsilon,\,\alpha_\varepsilon)$, where $f_\varepsilon=f|_{t=\varepsilon}$ and 
$\alpha_\varepsilon\,=\,\alpha|_{t=\varepsilon}$.

\item[2.] The fiber $\mathcal{Z}^\#_0$ of $q^\#$ over $0\,\in\, D_{e^{-1}}$ is 
$$\text{an}(\mathcal{Z}_{\mathbb{C}((t))},\, v_t)\, ,$$ the analytification of the 0-dimensional scheme 
$V(y^2-f,\alpha)\,\subset\,\mathbb{C}((t))^2$ over the non-Archimedean field $(\mathbb{C}((t)),v_t)$.

\item[3.] For $0\,<\,\delta \,\ll \,1$, the map $q^\#$ is open above $D_{\delta}$. 
\end{enumerate}

 \medskip
 Moreover, for generic values $0\,<\,\varepsilon \,\ll \,1$, the real plane curve
$\mathcal{U}^\#_\varepsilon\,\subset\, \mathbb{C}^2$ is hyperelliptic. Let $\mathcal{X}^\#_\varepsilon$
denote its compactification $\mathcal{U}^\#_\varepsilon\cup\{\infty_\varepsilon\}$ in
$\mathbb{CP}^1\times\mathbb{CP}^1$ and $D\,=\,2\cdot\infty_\varepsilon$. The inflection divisor of
the complete real linear series $|\mathcal{L}_\mathbb{R}(kD)|$ on $\mathcal{X}^\#_\varepsilon$ is
given by
 \[
 \text{Inf}(|\mathcal{L}_\mathbb{R}(kD)|)\,=\,[\mathcal{Z}^\#_\varepsilon]+m_\varepsilon\cdot \infty_\varepsilon
 \]
 where $[\mathcal{Z}^\#_\varepsilon]$ is the divisor associated to the 0-dimensional closed subscheme $\mathcal{Z}^\#_\varepsilon\,\subset\, \mathcal{U}^\#_\varepsilon$. Similarly, the inflection divisor of the linear series $|\mathcal{L}(kD)|$ on the hyperelliptic curve $\mathcal{X}_{\mb{K}}$ is given by
 \[
 \text{Inf}(|\mathcal{L}(kD)|)\,=\,[\mathcal{Z}^\#_0]+m\cdot\infty\
 \]
 where $[\mathcal{Z}^\#_0]$ is the divisor associated to the 0-dimensional closed subscheme $\mathcal{Z}^\#_0\subset \mathcal{U}^\#_0$.
 
\medskip
Now let $\mathcal{X}^\#_0$ denote the compactification $\mathcal{U}^\#_0\cup\{\infty\}$ in $\mathbb{KP}^{1,\text{an}}\times \mathbb{KP}^{1,\text{an}}$; up to a base change, this is just the  analytification $\text{an}(\mathcal{X}_{\mb{K}},v_t)$. We can easily compute the skeleton 
\[
\text{Sk}(\mathcal{X}^\#_0)\,\hookrightarrow\, \mathcal{X}^\#_0
\]
since $\mathcal{X}_{\mb{K}}$ is smooth and proper; see Figure \ref{fig:skeleton}. A detailed discussion of the skeleton is given in the next section, in connection with the tropicalization technique; however, it already seems useful to give a concrete description here.

 \begin{figure}[!htb]
 \centering
 \includegraphics{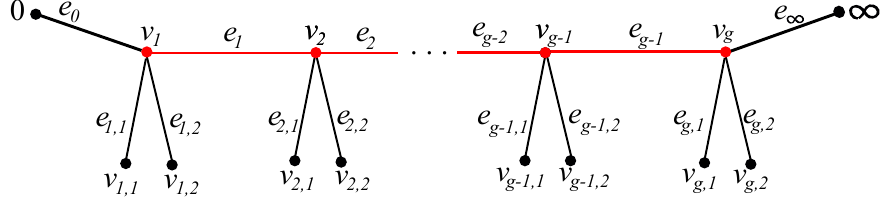}
 \caption{Skeleton of the compactification $\mathcal{X}^\#_0\,=\,
\text{an}(\mathcal{X}_{\mb{K}},v_t)$. Here $D\,=\,2\cdot\infty$.}
 \label{fig:skeleton}
 \end{figure}
 
 Explicitly, a model for the object $\text{Sk}(\mathcal{X}^\#_0)$ is the graph $G=(V,E)$ consisting of the vertices
 \[
 V\,=\,\{v_1,\ldots,v_g\}\cup\{0,\infty,v_{i,j}\::\:i=1,\ldots,g,\:j=1,2\}
 \]
 and the edges
 \[
 E\,=\,\{e_1,\ldots,e_{g-1}\}\cup\{e_0,e_\infty, e_{i,j}\::\:i=1,\ldots,g,\:j=1,2\}\, .
 \]
 We begin by describing the set $V$. The $2g+2$ vertices $\{0,\infty,v_{i,j}\::\:i=1,\ldots,g,\:j=1,2\}$ are all type I points, and this set is precisely $\text{Supp}(R_\pi)$, the support of the ramification divisor of the map %$\pi:\mathcal{X}_{\mathbb{C}((t))}\longrightarrow\mathbb{KP}^1$. 
 $\pi:\mathcal{X}_{\mathbb{K}}\longrightarrow\mathbb{KP}^1$. We thus write $V=\{v_1,\ldots,v_g\}\cup \text{Supp}(R_\pi)$.
 
 \medskip
 The $g$ vertices $\{v_1,\ldots,v_g\}$, on the other hand, are all type II points; for every $i=1, \dots, g$, the corresponding residue field $\widetilde{\mathscr{H}(v_i)}$ of the completed residue field $\mathscr{H}(v_i)$ has transcendence degree one over $\mb{C}$.
 Let $C_{v_i}$ denote the (unique) smooth projective algebraic curve over $\mathbb{C}$ whose field of rational functions $K(C_{v_i})$ equals $\widetilde{\mathscr{H}(v_i)}$. We will see later that each $C_{v_i}$ is a real algebraic curve of genus 1.
 
 \medskip
 For every $i\,=\,1,\,\ldots,\, g$ let $$N(v_i)\,=\,\{w\in V\::\:v_i\text{ is adjacent to }w\}$$ be
the neighborhood of the vertex $v_i$ inside the graph $G$. We then have a bijection $w\longmapsto p(w)$ between the elements $w\in N(v_i)$ and a subset $\mathcal{A}_i=\{p(w)\}_{w\in N(v_i)}$ of $C_{v_i}(\mathbb{C})$ which will be described explicitly in the next section. 
 
\medskip
Finally, we have a function $\ell\,:\,E\longrightarrow\mathbb{R}_{>0}\cup\{\infty\}$ satisfying $0<\ell(e)<\infty$ whenever $e\in\{e_1,\ldots,e_{g-1}\}$ and $\ell(e)=\infty$ otherwise. We conclude that the object $\text{Sk}(\mathcal{X}^\#_0)$ is a {\it metrized complex of algebraic curves over} $\mathbb{C}$ in the sense of \cite{AB}, described as a tuple
$(G=(V,E),\ell,\{(C_{v_i},\mathcal{A}_i)\}_{i=1,\ldots,g})$. 
 
\medskip
 A key point is that the points of $\text{Supp}(R_\pi)$ are marked points of our metrized complex $\text{Sk}(\mathcal{X}^\#_0)$; moreover, our metrized complex naturally carries a real structure, since $(C_{v_i},\mathcal{A}_i)$ is a marked real elliptic curve for every $i=1,\ldots,g$. For these reasons we will refer to a {\it marked metrized complex of algebraic curves over} $\mathbb{R}$.
 
 \medskip
 Each neighborhood $N(v_i)$ now has two types of points. We will call $p(w)\in \mathcal{A}_i$ a {\it point of attachment} of $C_{v_i}$ if $w\in V$ is a point of type II.
 
 \begin{rem}
 \label{relevant_part_skeleton}
 Consider the very affine curve %$\mathcal{X}_{\mathbb{C}((t))}^\circ\subset (\mathbb{K}^*)^2$
 $\mathcal{X}_{\mathbb{K}}^\circ\subset (\mathbb{K}^*)^2$ defined by
 \begin{equation*}
 %\mathcal{X}_{\mathbb{C}((t))}^\circ:=\mathcal{X}_{\mathbb{C}((t))}\cap(\mathbb{K}^*)^2=\mathcal{X}_{\mathbb{C}((t))}\setminus \text{Supp}(R_\pi).
 \mathcal{X}_{\mathbb{K}}^\circ:=\mathcal{X}_{\mathbb{K}}\cap(\mathbb{K}^*)^2=\mathcal{X}_{\mathbb{K}}\setminus \text{Supp}(R_\pi).
 \end{equation*}
 The marked points $\text{Supp}(R_\pi)$ of $\text{Sk}(\mathcal{X}^\#_0)$, together with their corresponding edges
$$\{e_0,e_\infty, e_{i,j}\::\:i=1,\ldots,g,\:j=1,2\}\, ,$$ emerge naturally from the tropicalization process; they represent the points needed to added in order to compactify %$\mathcal{X}_{\mathbb{C}((t))}^\circ$.
$\mathcal{X}_{\mathbb{K}}^\circ$.
 
 We now construct a metrized complex $\Gamma=(G=(V^{\pr},E),\ell,\{(Y_{v_i},\mathcal{B}_i)\}_{i=1,\ldots,g})$ representing $\mathcal{X}_{\mathbb{K}}^\circ$ using our marked metrized complex $\text{Sk}(\mathcal{X}^\#_0)$. To do so, we keep the same underlying set $E$ and the same function $\ell$ as before, while setting $V^{\pr}=V\setminus  \text{Supp}(R_\pi)$. The marked curves $(Y_{v_i},\mathcal{B}_i)$ are defined as follows: let $\mathcal{C}_i\subset \mathcal{A}_i$ consist of those points that are not points of attachment of $C_{v_i}$. Then $Y_{v_i}=C_{v_i}\setminus \mathcal{C}_i$ and $\mathcal{B}_i=\mathcal{A}_i\setminus \mathcal{C}_i$. Note that 
 \begin{equation}
 \label{decomposition_of_skeleton}
 \text{Sk}(\mathcal{X}^\#_0)=\Gamma\coprod \text{Supp}(R_\pi)
 \end{equation}
 and we will apply this decomposition in the specialization-based analysis of Section~\ref{Section_Specialization}.
 \end{rem}
 
 \section{Specialization via embedded tropicalization}
 \label{Section_Specialization}
 
In this section we will use Viro's theorem on the convergence of amoebas of affine hypersurfaces 
to obtain a precise description of the marked curves $\{(C_{v_i},\mathcal{A}_i)\}_{i=1,\ldots,g}$ 
and of the specialization of the inflection divisor of the complete linear series 
$|\mathcal{L}(kD)|$ on the hyperelliptic curve 
%$\mathcal{X}_{\mathbb{C}((t))}$ 
$\mathcal{X}_{\mathbb{K}}$ to the metrized complex of curves $\text{Sk}(\mathcal{X}^\#_0)$ introduced in the previous section. To do so we will use 
the decomposition \eqref{decomposition_of_skeleton}.
 
 \medskip
 As before, let $(\mathbb{K},v_t)$ denote the non-Archimedean Puiseux field valued by the t-adic norm $v_t$, normalized to satisfy $v_t(t)=e^{-1}$. Let $\mathbb{T}=(\mathbb{R}\cup\{-\infty\},\text{max},+)$ denote the tropical semifield, and set 
 \[
 \text{val}\,:=\,\log\circ v_t:\mathbb{K}\longrightarrow\mathbb{T}\, .
 \]
 
 Let $\text{Trop}\,:\,\mathcal{U}^\#_0\,\longrightarrow\,\mathbb{T}^2$ denote the tropicalization
morphism. Since $\mathbb{K}$ is algebraically closed and non-trivially valued, it follows that 
 \begin{equation}\label{Trop_X_0^ast}
 %\text{Trop}(\mathcal{U}^{\#}_0)\,=\,\overline{\text{Val}(\mathcal{U}_{\mathbb{C}((t))}(\mathbb{K}))}\, ,
 \text{Trop}(\mathcal{U}^{\#}_0)\,=\,\overline{\text{Val}(\mathcal{U}_{\mathbb{K}}(\mathbb{K}))}
 \end{equation}
 where the bar over the right-hand side denotes Euclidean closure. On the other hand, by Kapranov's
theorem, the right-hand side of \eqref{Trop_X_0^ast} is precisely the tropical curve
$V(\text{Trop}(y^2-f))$ associated to the tropical polynomial
$\text{Trop}(y^2-f)\,=\,\text{max}_{i=1,\ldots,2g+1}\{2y,ix-\nu_i\}$. 
 
 \medskip
 Accordingly we get a map
 \begin{equation}
 \label{mortrop}
 \text{Trop}\,:\,\text{Sk}(\mathcal{U}^\#_0)\,\longrightarrow\, V(\text{Trop}(y^2-f))
 \end{equation}
 that is $n$-to-$1$ along an edge $e$ of $V(\text{Trop}(y^2-f))$ with weight $n$. The
compactification of $V(\text{Trop}(y^2-f))$ inside $\mathbb{TP}^1\times \mathbb{TP}^1$ yields the image of $\text{Trop}(\text{Sk}(\mathcal{X}^\#_0))$; see Figure~\ref{fig:tropicalcurve}.
 
\begin{figure}[!htb]
 \centering
 \includegraphics{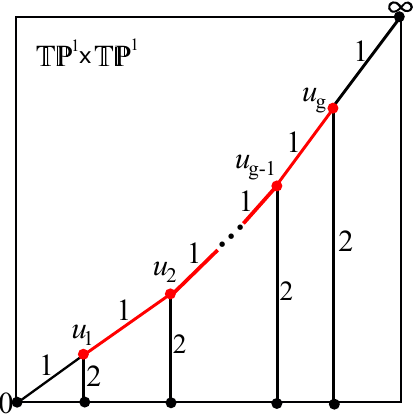}
 \caption{Image of the map $\text{Trop}\,:\,
\text{Sk}(\mathcal{X}^\#_0)\,\longrightarrow\, \mathbb{TP}^1\times\mathbb{TP}^1$. The weight of each
edge represents the local degree of the map.}
\label{fig:tropicalcurve}
\end{figure}
 
 \medskip
Now let $A\,=\,\mathbb{K}[x,y]/(y^2-f)$, so that $\mathcal{U}^\#_0\,=\,\text{an}(\text{Spec}(A),v_t)$. For each $i=1,\ldots,g$, the vertex $v_i\in \text{Sk}(\mathcal{U}^\#_0)$ is sent under the morphism \eqref{mortrop} to the point $u_i\,=\,(a_i,b_i)$ that induces the logarithmic valuation $u_i:A\longrightarrow\mathbb{T}$ defined by
\[
 F(x,y)\,=\,\sum_{m,n}c_{m,n}x^my^n \longmapsto \text{Trop}(F)(u_i)=\text{max}_{m,n}\{\text{val}(c_{m,n})+ma_i+nb_i\}. 
\]

We now compute $\widetilde{\mathscr{H}(u_i)}$. Since $\text{Ker}(u_i)\,=\,(0)$, it follows that
$\mathscr{H}(u_i)$ is the completion of $\text{Frac}(A)$ with respect to $u_i$. In Berkovich's
notation, we have
\[
\widetilde{\mathscr{H}(u_i)}\,=\,\mathscr{H}(u_i)^\circ/\mathscr{H}(u_i)^{\circ\circ}
\]
where $\mathscr{H}(u_i)^\circ\,=\,\{\tfrac{F}{G}\in \text{Frac}(A)\::\:u_i(F)\leq u_i(G)\}$ and $\mathscr{H}(u_i)^{\circ\circ}=\{\tfrac{F}{G}\in \text{Frac}(A)\::\:u_i(F)< u_i(G)\}$.

\medskip
Now suppose that $F(x,y)\,=\,\sum_{m,n}c_{m,n}x^my^n$ is an element of $A$ for which
$\tfrac{F}{1}\,\in\, \mathscr{H}(u_i)^\circ$. Then its residue $\widetilde{F}$ in
$\widetilde{\mathscr{H}(u_i)}$ is the polynomial $\text{in}_{u_i}(F)\,\in\,\mathbb{C}[X,Y]$, where
the parameters $X$ and $Y$ satisfy
\begin{equation}\label{in_f}
%Y^2-\sum_{(j,0)\,\in\,\Theta_i\cap\mathbb{Z}^2}a_jX^j\,=\,\text{in}_{u_i}(y^2-f)\,=\,0\, .
Y^2-\sum_{j=2i-1}^{2i+1}a_j X^j \,=\,0.
\end{equation}
Here $\text{in}_{u_i}(F)$ is
the limit of the polynomial $F$ under the flat degeneration defined by the weight $u_i\,\in\,
\mathbb{R}^2$; the equation \eqref{in_f} corresponds to the $u_i$-degeneration of the hyperelliptic equation $y^2-f=0$.

\medskip
It follows that $\widetilde{\mathscr{H}(v_i)}$ is precisely the field $\mathbb{R}(X)[Y]/(\text{in}_{u_i}(y^2-f))$ whenever the polynomial $\sum_{j=2i-1}^{2i+1}a_jX^j$ is separable. Suppose that this is the case, and let $U_i$ be the restriction of real curve $V(Y^2-\sum_{j=2i-1}^{2i+1}a_jX^j)$ to $(\mathbb{C}^*)^2$. In particular, we deduce that $C_{v_i}\setminus\mathcal{A}_i$ and $U_i$ are isomorphic.
 
\medskip
Let $\overline{U_i}$ denote the compactification of $U_i$ inside $\mb{CP}^1 \times \mb{CP}^1$; then $\overline{U_i}$ has geometric genus 1, because the triangle $\Theta_i$ has a single interior lattice point. Note that $\overline{U_i}$ is singular along the boundary; its normalization is precisely $C_{v_i}$. Separability ensures that for $i=1,\ldots,g$ we may write
\begin{equation}
 \label{specialization_of_polys}
 \sum_{j=2i-1}^{2i+1}a_jX^j\,=\, 
 a_{2i+1}X^{2i-1}(X-x_{i,1})(X-x_{i,2})
\end{equation}
 for some $a_{2i+1}$ and $x_{i,j}$ in $\mathbb{C}^*$.
 
 \medskip
 The marked points $\mathcal{A}_i=\{p(w)\}_{w\in N(v_i)}$ of $C_{v_i}$ are in correspondence with the points of $\overline{U_i}\setminus U_i$. The curve $\overline{U_i}$ is singular at $p(w)$ if and only if $w$ corresponds to point of attachment of $C_{v_i}$. 
 
 \begin{rem}
 The upshot of the preceding discussion is that the skeleton $\text{Sk}(\mathcal{X}^\#_0)$ is a combinatorial object which simultaneously contains global information from $\mathcal{X}_{\mathbb{K}}$ and local information from the various elliptic curves $U_i\subset(\mathbb{C}^*)^2$, and it refines the tropical curve $V(\text{Trop}(y^2-f))$. Since it carries all the relevant information of $\mathcal{X}^\#_0$, it may be regarded as the {\bf limit object} of the family $\{\mathcal{X}^\#_\varepsilon\}_{0<\varepsilon \ll 1}$ of real hyperelliptic curves.
 \end{rem}
 
 Our limit construction also works at the level of divisors; we will apply it to the inflection divisor $\text{Inf}(|\mathcal{L}(kD)|)$ associated with $kD=2k\cdot\infty$ over $\mathcal{X}_{\mathbb{K}}$. %$\mathcal{X}_{\mb{K}}$. 
 Accordingly, we define the {\it specialization} map
 \[
\tau_*=\tau_*^{\text{Sk}(\mathcal{X}^\#_0)}:\text{Div}(\mathcal{X}_{\mathbb{K}})\longrightarrow \text{Div}(\text{Sk}(\mathcal{X}^\#_0))
%\tau_*=\tau_*^{\text{Sk}(\mathcal{X}^\#_0)}:\text{Div}(\mathcal{X}_{\mb{K}})\longrightarrow \text{Div}(\text{Sk}(\mathcal{X}^\#_0)) 
 \]
 to be the composition of the inclusion $\mathcal{X}_{\mathbb{K}}\hookrightarrow \mathcal{X}^\#_0$
% $\mathcal{X}_{\mb{K}}\hookrightarrow \mathcal{X}^\#_0$
 with the retraction $\tau:\mathcal{X}^\#_0\longrightarrow \text{Sk}(\mathcal{X}^\#_0)$. The elements of $\text{Div}(\text{Sk}(\mathcal{X}^\#_0))$ are divisors on metrized complexes of curves {\it that respect the marked points}. Namely, they are of the form $D_{\text{Sk}(\mathcal{X}^\#_0)}\oplus\sum_{i=1}^g D_{v_i}$ where 
 
 \begin{itemize}
 \item $D_{\text{Sk}(\mathcal{X}^\#_0)}$ is a divisor  on the metric graph $(G=(V,E),\ell)$ underlying $\text{Sk}(\mathcal{X}^\#_0)$; 
 \item $D_{v_i}\in\text{Div}(C_{v_i})$ for $i=1,\ldots,g$. Since $C_{v_i}=Y_{v_i}\coprod \mathcal{C}_i$, we require 
 \[
 D_{\text{Sk}(\mathcal{X}^\#_0)}(v_i)=\text{deg}(D_{v_i}|_{Y_{v_i}})
 \]
 and that 
 \[
 D_{\text{Sk}(\mathcal{X}^\#_0)}(v)=D_{v_i}(p(v))
 \]
 for every point $p=p(v)$ in $\mathcal{C}_i$ corresponding to a type I vertex $v\in N(v_i)$. Here $D_{\text{Sk}(\mathcal{X}^\#_0)}(v)$ denotes the coefficient of $D_{\text{Sk}(\mathcal{X}^\#_0)}$ at $v$.
\end{itemize} 
 
 \medskip
 In our particular case, we will show that $\tau_*(\text{Inf}(|\mathcal{L}(kD)|))$ is of the form $D_{\text{Sk}(\mathcal{X}^\#_0)}\oplus\sum_{i=1}^g D_{v_i}$, where 
 \begin{itemize}
 \item $D_{\text{Sk}(\mathcal{X}^\#_0)}$ is a divisor of degree $g(2k-g+1)^2$ on the metric graph $(G=(V,E),\ell)$ underlying $\text{Sk}(\mathcal{X}^\#_0)$ and
 \item $D_{v_i}\in\text{Div}(C_{v_i}\setminus\mathcal{B}_i)$ for $i=1,\ldots,g$,
 subject to 
 \[
 D_{\text{Sk}(\mathcal{X}^\#_0)}(v_i)=\text{deg}(D_{v_i})
 \]
 where $D_{\text{Sk}(\mathcal{X}^\#_0)}(v)$ is the coefficient of $D_{\text{Sk}(\mathcal{X}^\#_0)}$ at $v$.
\end{itemize} 

The fact that $\tau_*(\text{Inf}(|\mathcal{L}(kD)|))$ respects the marked points will follow from Theorems \ref{inflection_along_ell_curves} and \ref{inflection_along_ramification_divisor}.
 
%Recall that the meromorphic functions $\mathcal{F}=\{\phi_0,\ldots,\phi_{2k-g}\}$ as in \eqref{nafamily} determine a basis for $H^0(\mathcal{L}(kD))$. 
To begin, recall from Proposition \ref{troispartiesdiv} that the divisor $\text{Inf}(|\mathcal{L}(kD)|)$ on $ \mathcal{X}_{\mathbb{K}}$ admits a decomposition
\[
\text{Inf}(|\mathcal{L}(kD)|)= R+S
\]
where $R$ is supported on $R_\pi$ and $S=\text{div}_{\mathcal{X}_{\mathbb{K}}^\circ}(\alpha|_{\mathcal{X}_{\mathbb{K}}^\circ})$, where $\alpha|_{\mathcal{X}_{\mathbb{K}}^\circ}$ is a regular function on $\mathcal{X}_{\mathbb{K}}^\circ=\mathcal{X}_{\mathbb{K}}\setminus R_\pi$ computed as in Equation~\eqref{alpha_in_U}. It follows that
\[
\tau_*(\text{Inf}(|\mathcal{L}(kD)|))=R+\tau_*(\text{div}_{\mathcal{X}_{\mathbb{K}}^\circ}(\alpha|_{\mathcal{X}_{\mathbb{K}}^\circ}));
\]
indeed, $R$ already belongs to the skeleton, so is invariant under the specialization process. It then remains to compute 
\begin{equation}\label{div_w_alpha}
\tau_*(\text{div}_{\mathcal{X}_{\mathbb{K}}^\circ}(\alpha|_{\mathcal{X}_{\mathbb{K}}^\circ})).
\end{equation}
In other words, in order to determine the specialization of the inflection divisor to the skeleton we may ignore all of the marked (i.e., type I) points of $\text{Sk}(\mathcal{X}^\#_0)$ and compute \eqref{div_w_alpha} along the metrized complex $\Gamma$ described in Remark \ref{relevant_part_skeleton}.

\medskip
The specialization \eqref{div_w_alpha} may in fact be realized explicitly as follows. For every $i=1,\ldots,g$, we have an {\it initial coefficient} map
\begin{equation}\label{initial_coeff_map}
\text{ic}:\text{Val}^{-1}(u_i)\longrightarrow U_i
\end{equation}
given by $\text{ic}(\alpha t^{a_i}+\cdots,\beta t^{b_i}+\cdots)=(\alpha,\beta).$ We also have a diagram 

\begin{equation}
\label{initial_coefficient}
 \xymatrix{
 \tau^{-1}(v_i)\ar[r]^{\text{sp}}\ar[d]_{\cong}&C_{v_i}\setminus\mathcal{A}_i\ar[d]^{\cong}\\
 \text{Val}^{-1}(u_i)\ar[r]^-{\text{ic}}&U_i\\
 }
\end{equation}
which is clearly commutative. It follows that the specialization map for divisors on models coincides with the initial coefficient map \eqref{initial_coeff_map} on points, extended by linearity to a map on divisors.

\medskip
Going forward, remember that $C_{v_i}$ refers to the normalization of the singular elliptic curve $\overline{U_i}$ obtained from our $i$-th initial degeneration, as above. Our construction in fact specifies a specialization of {\it linear series} to each curve $C_{v_i}$, $1 \leq i \leq g$, which is both similar to, yet apparently distinct from, limit linear series in the sense of Eisenbud--Harris and Amini--Baker. 

\medskip
Namely, recall that the meromorphic functions $\mathcal{F}=\{\phi_0,\ldots,\phi_{2k-g}\}$ as in \eqref{nafamily} determine a basis for $H^0(\mathcal{L}(kD))$. For each $i$, consider the collection of meromorphic functions
\begin{equation}
    \label{spe-eqs}
    \mathcal{F}(i)\,:=\,\{\widetilde{\phi}_0(i),\ldots,\widetilde{\phi}_{2k-g}(i)\}
\,\subset\, \widetilde{\mathscr{H}(v_i)}
\end{equation}
 canonically induced from $\mathcal{F}$ via the initial degeneration defined by the weight $u_i$. That is,
\[
\widetilde{\phi}_j(i)=X^j\text{ for }0\leq j\leq k\text{, and } \widetilde{\phi}_{j}(i)=X^{j-k-1}Y\text{ for }k+1\leq j\leq 2k-g.
\]
Let $H_i\subset \widetilde{\mathscr{H}(v_i)}$ be the vector space generated by $\mathcal{F}(i)$. This is a real vector space of dimension $2k-g+1$, so it is reasonable to surmise that there exists a $\sigma_{C_{v_i}}$-invariant divisor $D_i$ of degree $2k$ on $C_{v_i}$ such that $H_i\subseteq H^0(\mathcal{L}_{\mathbb{R}}(D_i))$. The following result shows that this is indeed the case.

\begin{lemma}\label{specialization_of_linear_series}
Let $H_i$ be as above, $1 \leq i \leq g$. We have
\[
H_i \subset H^0(C_{v_i},\mc{L}_{\mathbb{R}}(2k \cdot \infty))
\]
where $\infty$ abusively denotes the support of the pullback of $\infty \in \mb{P}^1$ by the (hyperelliptic) structure morphism $C_{v_i} \ra \mb{P}^1$.
\end{lemma}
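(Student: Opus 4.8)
The plan is to verify the containment one generator at a time, by controlling the pole divisor of each function in $\mathcal{F}(i)$ directly on the elliptic curve $C_{v_i}$. Since $H_i$ is spanned by the $\widetilde{\phi}_j(i)$, it suffices to show that every $X^j$ (for $0\leq j\leq k$) and every $X^{j-k-1}Y$ (for $k+1\leq j\leq 2k-g$) has its pole divisor supported at the single point lying over $X=\infty$, with pole order at most $2k$.

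First I would fix the hyperelliptic model of $C_{v_i}$ coming from \eqref{specialization_of_polys}: $C_{v_i}$ is the normalization of the plane curve $Y^2=a_{2i+1}X^{2i-1}(X-x_{i,1})(X-x_{i,2})$, with hyperelliptic map given by the coordinate $X$. Because the polynomial on the right has odd degree $2i+1$, the fibre over $X=\infty$ consists of a single ramification point $P_\infty$ --- this is exactly the support of $\pi^*(\infty)$ denoted $\infty$ in the statement --- and likewise the fibre over $X=0$ is a single point $Q_0$. Choosing a uniformizer at $P_\infty$ and comparing orders on both sides of the defining equation yields $\text{ord}_{P_\infty}(X)=-2$ and $\text{ord}_{P_\infty}(Y)=-(2i+1)$; the analogous computation at $Q_0$ gives $\text{ord}_{Q_0}(X)=2$ and $\text{ord}_{Q_0}(Y)=2i-1>0$, so that both $X$ and $Y$ are regular there.

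The next step is to observe that on $C_{v_i}$ both $X$ and $Y$ have their only pole at $P_\infty$: indeed $X$ has poles only over $X=\infty$, while on a hyperelliptic curve $Y=\infty$ forces $X=\infty$, and the remaining boundary points of $\overline{U_i}$ lie over $X=0$ (the point $Q_0$, where $Y$ vanishes) and over the finite roots $x_{i,1},x_{i,2}$ (where again $Y=0$), so neither function degenerates there. Consequently the pole divisor of every $\widetilde{\phi}_j(i)$ is supported at $P_\infty$, and I can read off $\text{ord}_{P_\infty}(X^j)=-2j\geq -2k$ for $0\leq j\leq k$, together with $\text{ord}_{P_\infty}(X^{j-k-1}Y)=-\bigl(2(j-k-1)+(2i+1)\bigr)\geq -\bigl(2(k-g-1)+2i+1\bigr)=-(2k-2g+2i-1)$ for $k+1\leq j\leq 2k-g$. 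The latter quantity is $>-2k$ precisely because $i\leq g$ forces $2i-2g-1<0$. Hence every generator of $H_i$ lies in $H^0(C_{v_i},\mathcal{L}(2k\cdot P_\infty))$, and since the coefficients $a_j$ and roots $x_{i,j}$ are real while $P_\infty$ is $\sigma_{C_{v_i}}$-invariant, these are real sections, giving $H_i\subset H^0(C_{v_i},\mathcal{L}_\mathbb{R}(2k\cdot\infty))$ as claimed.

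The main obstacle is the local analysis at the boundary points of the singular model $\overline{U_i}$, in particular at the point over $X=0$, where for $i\geq 2$ the affine equation is singular and the pole count must be carried out on the normalization $C_{v_i}$ rather than on $\overline{U_i}$ itself. The crucial arithmetic input is the odd degree $2i+1$ of the defining polynomial, which guarantees a single ramified point over $X=\infty$; this is exactly what turns the parity inequality $2i-2g-1<0$ (equivalently $i\leq g$) into a genuine pole bound for the ``odd'' generators $X^{j-k-1}Y$, and thereby places all of $H_i$ inside the degree-$2k$ series.
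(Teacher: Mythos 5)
Your proposal is correct and takes essentially the same route as the paper: both arguments reduce to the divisor computations $\text{div}_{C_{v_i}}(x)=2\cdot 0-2\cdot\infty$ and $\text{div}_{C_{v_i}}(y)=(2i-1)\cdot 0+(x_{i,1},0)+(x_{i,2},0)-(2i+1)\cdot\infty$ on the normalization, followed by bounding the pole order of each monomial generator at the unique (ramified) point over $X=\infty$; your write-up merely adds the explicit unibranch analysis at the $A_{2i-2}$ point over $X=0$ and the reality check. Incidentally, your pole order $2(j-k-1)+2i+1$ for $x^{j-k-1}y$ at $\infty$ is the correct one and silently fixes a typo in the paper's displayed formula, which records the pole order of $x^jy$ at $\infty$ as $2j+2i-1$ rather than $2j+2i+1$ (as the degree-zero condition forces).
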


\begin{proof}
The key to the proof, which follows easily from our construction, is the fact that
\begin{equation}\label{x_and_y}
\begin{split}
\text{div}_{C_{v_i}}(x)&=2\cdot0-2\cdot\infty \text{ and}\\
\text{div}_{C_{v_i}}(y)&=(2i-1)\cdot0+1\cdot(\alpha_1,0)+1\cdot(\alpha_2,0)-(2i+1)\cdot\infty
\end{split}
\end{equation}
for all $i=1,\dots,g$. From the equations~\eqref{x_and_y} we deduce that
$$
\text{div}_{C_{v_i}}(x^j)\,=\, 2j\cdot0-2j\cdot\infty
$$
for all $j\,=\,0, \dots, k$, and
$$
\text{div}_{C_{v_i}}(x^jy)\,=\, (2j+2i-1)\cdot 0+ 1\cdot(\alpha_1,0)+1\cdot(\alpha_2,0)-
(2j+2i-1)\cdot \infty
$$
for all $j\,=\,0,\dots, k-g-1$. The lemma is now clear.
\end{proof}

\medskip
We will now relate the inflection of the limit linear series $(\mc{L}_{\mathbb{R}}(2k \cdot \infty),H_j)$, $j=1,\dots,g$, along the elliptic curves $C_{v_j}$ to the inflection of the original series $|\mc{L}_{\mb{R}}(kD)|$ along the hyperelliptic curve $X$. In the proofs of Theorems~\ref{inflection_along_ell_curves} and \ref{inflection_along_ramification_divisor} below we assume $k \geq g+1$, but a trivial modification of the arguments settles the case $k=g$, with the statements of
the theorems unchanged.

\medskip
More precisely, for $j\,=\,1,\,\ldots,\,g$, begin by compactifying the curve
$V(y^2-\beta x^{2j-1}(x-\alpha_1)(x-\alpha_2))$ inside $\mathbb{P}^1\times\mathbb{P}^1$. Assume
the complex numbers $\be$,
$\al_1$ and $\al_2$ are nonzero, and that $\al_1 \,\neq\, \al_2$. Suppose further that the
polynomial $\beta x^{2j-1}(x-\alpha_1)(x-\alpha_2)$ is real.

\medskip
Let $C_j\,=\,C_{v_j}$ denote the normalization of the curve above; thus $C_j$ is a real elliptic
curve with non-empty real part. The number $n\,=\,n(C_j)$ of components of $C_j(\mb{R})$ is
characterized by the following dichotomy: $n(X_j)\,=\,1$ if and only if $\alpha_1\,=\,
\overline{\alpha_2}$ while $n(C_j)\,=\,2$ if and only if $\alpha_1,\,\alpha_2\,\in \,
\mathbb{R}^*$. The function field $K(C_j)$ is equal to
$\mathbb{R}(x)[y]/(y^2-\beta x^{2j-1}(x-\alpha_1)(x-\alpha_2))$. 

\medskip
We will compute the inflectionary weight of $(\mc{L}_{\mathbb{R}}(2k \cdot \infty),H_j)$ at
each of the four marked points $P \,\in\, \{0,\,(\alpha_1,0),\,(\alpha_2,0),\,\infty\}$ of
$C_j$. Recall that the {\it inflectionary weight} $|P|$ of a linear series $V$ of rank $r$ in a
point $P$ is the total difference between the sequence of vanishing orders of a local basis of
holomorphic sections for $V$ and the generic sequence $(0,\,1,\,\cdots,\,r)$.

\begin{thm}[Inflection in marked points of elliptic curves]\label{inflection_along_ell_curves}
For every $j\,=\,1,\,\cdots,\,g$, let $C_j$ and $H_j \,\subset\, H^0(C_j,\mc{L}_\mathbb{R}(2k
\cdot \infty))$ denote the smooth elliptic curve and linear series, respectively, constructed
above. The inflectionary weights of $H_j$ in the marked points $0, \infty$, $(\al_1,0)$ and
$(\al_2,0)$ are given by
\[
\begin{split}
|0|&= \binom{g+1}{2}+ 2(k-g)(j-1); \\
|\infty|&= \binom{g+1}{2}+ 2(k-g)(g-j); \text{ and } \\
|(\al_1,0)|&=|(\al_2,0)|= \binom{g+1}{2}.
\end{split}
\]
In particular, we have $|0|+|\infty|\,= \,2\binom{g+1}{2}+ 2(k-g)(g-1)$, irrespective of $j$.
\end{thm}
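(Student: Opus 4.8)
The plan is to compute the vanishing sequence of $H_j$ at each of the four marked points directly, using the divisor formulas for $x$ and $x^jy$ established in Lemma~\ref{specialization_of_linear_series}. Recall that $H_j$ is spanned by $\widetilde{\phi}_0(j),\ldots,\widetilde{\phi}_{2k-g}(j)$, where $\widetilde{\phi}_\ell(j)=X^\ell$ for $0\leq\ell\leq k$ and $\widetilde{\phi}_\ell(j)=X^{\ell-k-1}Y$ for $k+1\leq\ell\leq 2k-g$. From the equations \eqref{x_and_y} one reads off the order of vanishing $\text{ord}_P$ of each basis section at $P\in\{0,(\al_1,0),(\al_2,0),\infty\}$: at $P=0$ the section $X^\ell$ has order $2\ell$ and $X^{\ell-k-1}Y$ has order $2(\ell-k-1)+(2j-1)$; at $P=\infty$ the orders are $-(\text{pole order})$ shifted by the twist $2k\cdot\infty$, so $X^\ell$ contributes $2k-2\ell$ and $X^{\ell-k-1}Y$ contributes $2k-(2(\ell-k-1)+(2j+1))$; and at each of $(\al_1,0),(\al_2,0)$ the $X^\ell$ vanish to order $0$ while the $X^{\ell-k-1}Y$ vanish to order exactly $1$. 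The inflectionary weight $|P|$ is then $\sum_\ell(\text{ord}_P s_{(\ell)})-\binom{2k-g+1}{2}$, where $s_{(\ell)}$ is the section achieving the $\ell$-th smallest order and the subtracted term is the sum $0+1+\cdots+(2k-g)$ of the generic vanishing sequence.

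First I would organize the $2k-g+1$ vanishing orders at each marked point into a single sorted list. The essential feature is that at $0$ and $\infty$ the ``$x$-part'' (orders forming an arithmetic progression with common difference $2$) interleaves with the ``$y$-part'' (another progression with difference $2$, offset by the parity-breaking exponent $2j\pm1$), so the combined sequence is a merge of two even/odd progressions whose gaps from the generic sequence must be tallied carefully. At the two finite branch points $(\al_\ell,0)$ the computation is cleaner: the $k+1$ sections $X^\ell$ give distinct orders $0,1,\ldots$ only after passing to the local uniformizer (recall $x-\al_\ell$ is \emph{not} a uniformizer at a branch point, $y$ is), so I would rewrite the local picture in terms of a uniformizer $s$ with $x-\al_\ell\sim s^2$ and $y\sim s$, whereupon the orders become a merge of $\{0,2,4,\ldots,2k\}$ and $\{1,3,\ldots\}$ and the excess over $(0,1,\ldots,2k-g)$ works out to $\binom{g+1}{2}$ independently of $j$, matching the ramification-point weight anticipated in the remark after Theorem~\ref{realpartcaseg}.

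The main obstacle I expect is the bookkeeping at $0$ and $\infty$, where the answer genuinely depends on $j$ through the term $2(k-g)(j-1)$ (resp.\ $2(k-g)(g-j)$). The point is that the $y$-sections $X^{\ell-k-1}Y$ for $k+1\leq\ell\leq 2k-g$ carry the factor $X^{2j-1}$ near $0$, and this large vanishing order ($\geq 2j-1$) forces these $k-g$ sections to sit high in the sorted list; quantifying precisely how far they are displaced from their generic slots produces the linear-in-$j$ correction. I would verify this by writing the merged sequence explicitly and summing the displacements, treating $0$ and $\infty$ symmetrically (the substitution $j\leftrightarrow g+1-j$ together with $x\mapsto 1/x$ exchanges the two computations, which is why $|0|$ and $|\infty|$ differ only by swapping $j-1$ and $g-j$). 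Finally, the last assertion $|0|+|\infty|=2\binom{g+1}{2}+2(k-g)(g-1)$ is immediate upon adding the two displayed formulas, since $(j-1)+(g-j)=g-1$, so no separate argument is needed.
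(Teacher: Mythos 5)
Your proposal is correct and follows essentially the same route as the paper: reading off vanishing orders at the four marked points from the divisor formulas \eqref{x_and_y}, passing at the branch points $(\al_\ell,0)$ to the adapted combinations $(x-\al_\ell)^i$ (the paper makes this explicit via the binomial expansion, exactly your uniformizer picture with $x-\al_\ell\sim s^2$, $y\sim s$), and computing each weight as the sum of the (pairwise distinct, by even/odd parity) orders minus the generic sum $\binom{2k-g+1}{2}$. One small note: since the orders at each point are distinct, the sum over the merged list equals the sum over the raw lists, so the ``displacement'' bookkeeping at $0$ and $\infty$ you anticipate reduces to the closed-form sums the paper writes down directly.
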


\begin{proof}
We begin by analyzing the case $P\,=\,(\al,0)\,:=\,(\al_j,0)$, where $j=1,2$. Note that
$(x-\alpha)^i\,=\,\sum_{\ell=0}^i\binom{i}{\ell}(-\alpha)^{i-\ell}x^\ell$ belongs to
$H^0(\mathcal{L}_\mathbb{R}(2k \cdot \infty))$ whenever $0 \,\leq\, i \,\leq\, k$ and has
vanishing order $\text{ord}((x-\alpha)^i,(\alpha,0))\,=\,2i$. Similarly, $(x-\al)^iy$ belongs to
$H^0(\mathcal{L}_\mathbb{R}(2k \cdot \infty))$ whenever $0 \,\leq\, i \,\leq \,k-g-1$ and vanishes
to order $2i+1$ in $(\al,0)$. It follows the vanishing sequence of $H_j$ in $(\al,0)$ is
\[
\text{ord}(H_j,(\al,0))\,=\,(0,1,\dots,2(k-g)-1,2(k-g); 2(k-g)+2, 2(k-g)+4,\dots, 2k)
\]
and the inflectionary weight is $|(\al,0)|\,=\,\binom{g+1}{2}$.

\medskip
In a similar vein, the inflection of $H_j$ in $0$ is determined by the vanishing orders of the functions $\mc{F}(j)$ in zero, namely
\[
\text{ord}(x^i,0)= 2i, 0 \leq i \leq k \text{ and }
\text{ord}(x^iy,0) = (2i+2j-1), 0 \leq i \leq k-g-1.
\]
It follows that the inflectionary weight in zero is given by
\[
|0|=2\bigg(\binom{k+1}{2}+ \binom{k-g}{2}\bigg)+ (2j-1)(k-g)- \binom{2k-g+1}{2}= \binom{g+1}{2}+ 2(k-g)(j-1).
\]

Finally, when $P=\infty$, we proceed much as in the $P=0$ case. Indeed, the inflection of $H_i$ in $\infty$ is determined by the pole orders in $\infty$ of the meromorphic functions $\mc{F}(j)$, normalized by the generic pole order $2k$. Namely, we have 
\[
\text{ord}(x^i,\infty)= 2k-2i, 0 \leq i \leq k \text{ and }
\text{ord}(x^iy,\infty) = 2k-(2i+2j+1), 0 \leq i \leq k-g-1.
\]
It follows that
\[
\begin{split}
|\infty|&=-2\binom{k+1}{2}- (2j+1)(k-g)- 2\binom{k-g}{2}- \bigg((-2k)(2k-g+1)+ \binom{2k-g+1}{2}\bigg) \\
&=\binom{g+1}{2}+ 2(k-g)(g-j).
\end{split}
\]
\end{proof}

It is now easy to see that our collection of linear series $\{H_j\}$ satisfies a {\it compatibility} relation in the points of attachment analogous to the defining condition for (refined) Eisenbud--Harris limit linear series \cite{EH}.

\begin{coro}\label{compatibility}
The set of linear series $\{H_j: 1 \leq j \leq g\}$ satisfies the compatibility relation
\[
o_i(H_j,\infty)+ o_{2k-g-i}(H_{j+1},0)= 2k
\]
for all $i=0,\dots,g$ and for all $j=0,\dots,2k-g$, where $o(H_j,P)=(o_1(P),\dots,o_{2k-g}(P))$ denotes the set of vanishing orders of $H_j$ in the point $P$ of the $j$th elliptic component, listed in strictly increasing order. 
\end{coro}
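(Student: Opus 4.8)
The plan is to deduce the compatibility relation directly from the vanishing-order computations already carried out in the proof of Theorem~\ref{inflection_along_ell_curves}, rather than from the inflectionary weights per se. The essential observation is that the relation asserts that the vanishing sequence of $H_j$ at the point of attachment $\infty\in C_j$ and the vanishing sequence of $H_{j+1}$ at the point of attachment $0\in C_{j+1}$ are \emph{complementary} with respect to the common degree $2k$; geometrically, these two points are exactly the two ends of the finite edge $e_j$ linking the neighboring elliptic components $C_{v_j}$ and $C_{v_{j+1}}$, so one should read the corollary as the index range $j=1,\ldots,g-1$ together with the full vanishing index $i=0,\ldots,2k-g$.

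First I would record the two vanishing sequences as multisets. From the proof of Theorem~\ref{inflection_along_ell_curves}, the vanishing orders of the basis $\mc{F}(j)=\{x^i\}_{0\le i\le k}\cup\{x^iy\}_{0\le i\le k-g-1}$ at $\infty\in C_j$ are $2k-2i$ (coming from the $x^i$) and $2k-(2i+2j+1)$ (coming from the $x^iy$); call this multiset $A_j^\infty$. Likewise, the divisorial computation $\text{div}_{C_{j+1}}(x)=2\cdot 0-2\cdot\infty$ and $\text{div}_{C_{j+1}}(y)=(2j+1)\cdot 0+\cdots-(2j+3)\cdot\infty$ furnished by Lemma~\ref{specialization_of_linear_series} yields vanishing orders $2i$ (from $x^i$) and $2i+(2j+1)$ (from $x^iy$) at $0\in C_{j+1}$; call this multiset $A_{j+1}^0$.

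The key step is then to verify the set identity $A_{j+1}^0=\{2k-a:a\in A_j^\infty\}$. Here it is convenient to split each sequence into its \emph{even} part (the orders attached to the monomials $x^i$) and its \emph{odd} part (those attached to the $x^iy$). The even part of each sequence is exactly $\{0,2,4,\ldots,2k\}$, which is manifestly stable under $a\mapsto 2k-a$; and a one-line reindexing shows that the odd part $\{2k-(2i+2j+1):0\le i\le k-g-1\}$ of $A_j^\infty$ maps under $a\mapsto 2k-a$ precisely onto the odd part $\{2i+(2j+1):0\le i\le k-g-1\}$ of $A_{j+1}^0$, with the same range of $i$. Since $a\mapsto 2k-a$ is an order-reversing involution, the set identity forces the $i$th-smallest element of $A_{j+1}^0$ to equal $2k$ minus the $(2k-g-i)$th-smallest element of $A_j^\infty$, which is exactly the asserted relation $o_i(H_j,\infty)+o_{2k-g-i}(H_{j+1},0)=2k$.

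I expect the only genuine subtlety to be bookkeeping rather than mathematics: one must confirm that no vanishing order is simultaneously even and odd (automatic here, since the even part consists of even integers in $[0,2k]$ and the odd part of odd integers), so that both $A_j^\infty$ and $A_{j+1}^0$ are honest strictly increasing sequences, and that the complementation survives passage to the \emph{sorted} sequences. Invoking the order-reversing involution $a\mapsto 2k-a$ circumvents any need to write down the merged sequence explicitly, whose interleaving pattern of even and odd entries genuinely depends on $j$; thus the entire argument reduces to the two elementary reindexings of the even and odd parts above.
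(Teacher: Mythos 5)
Your proposal is correct and takes essentially the same route as the paper: the paper's proof of Corollary~\ref{compatibility} consists of the single line ``this follows immediately from the proof of Theorem~\ref{inflection_along_ell_curves},'' and the vanishing-order lists computed there are precisely your multisets $A_j^\infty$ and $A_{j+1}^0$, complementary under the order-reversing involution $a\mapsto 2k-a$. Your write-up merely makes that one-liner explicit (including the correct reading of the index ranges as $i=0,\dots,2k-g$ and $j=1,\dots,g-1$, and the observation that the even/odd parity split guarantees the $2k-g+1$ basis orders are distinct and hence constitute the full vanishing sequence), so no gap remains.
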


\begin{proof}
This follows immediately from the proof of Theorem~\ref{inflection_along_ell_curves}.
\end{proof}

\begin{rem}\label{unambiguous_specialization} According to our construction, points of the inflection divisor $\text{Inf}(|\mathcal{L}_\mathbb{R}(kD)|)$ of the complete linear series along the hyperelliptic curve specialize unambiguously to particular elliptic components $C_{v_i}$, and never to the edge common to two adjacent type-II vertices of the skeleton or to an infinite length edge. These edges are dual to an edge in the subdivision of the Newton polygon of the hyperelliptic curve linking either the vertices labeled $y^2$ and $x^{2i-1}$ for $i=1,\ldots,g+1$, or the vertices $x^{2i-1}$ and $x^{2i+1}$ for $i=1,\ldots,g$.

Further, any curve of the form $y^2- \nu x^{\mu}=0$ with $\nu \in \mb{C}$ and $\mu \geq 3$ is unramified away from 0 or $\infty$, e.g. because it admits a parametrization by monomials in a single auxiliary variable. 
\end{rem}

%We deduce the following statement.

Recall the decomposition $\text{Sk}(\mathcal{X}^\#_0)=\Gamma\coprod \text{Supp}(R_\pi)$ from \eqref{decomposition_of_skeleton}, and that $\tau_*(\text{Inf}(|\mathcal{L}(kD)|))=D_{\text{Sk}(\mathcal{X}^\#_0)}\oplus\sum_{i=1}^g D_{v_i}=R+\tau_*(S)$, where $R$ is a divisor supported in $R_\pi$ and $\tau_*(S)=D_\Gamma\oplus\sum_{i=1}^g D_{v_i}^\circ$, where $D_\Gamma$ is supported on $\Gamma$ and  $D_{v_i}^\circ\in\text{Div}(C_{v_i}\setminus \mathcal{C}_i)$. Recall that for $i=1,\ldots,g$,  $\mc{C}_i\subset \mathcal{A}_i$ denotes the collection of marked points on $C_{v_i}$ which are not points of attachment of the curve $C_{v_i}$.

\begin{prop}
Write the divisor Inf$(|\mathcal{L}(kD)|)$ on $\mathcal{X}_{\mathbb{K}}(\mathbb{K})$ in decomposed form as $R+S$, and 
the divisor Inf$(H_i)$ on $C_{v_i}$ correspondingly as $R_i+S_i$, for all $i=1,\dots,g$. We then have
\[
\tau_*(S)=4(k+1)(k-g)\sum_{i=1}^g v_i\oplus_{i=1}^gS_i.
\]
\end{prop}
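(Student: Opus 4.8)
The plan is to compute the two constituents of $\tau_*(S)=D_\Gamma\oplus\sum_{i=1}^g D_{v_i}^\circ$ separately, where, as recorded just before the statement, $D_\Gamma$ is supported on the metric graph underlying $\Gamma$ and $D_{v_i}^\circ\in\text{Div}(C_{v_i}\setminus\mathcal{C}_i)$. The essential content is the identification $D_{v_i}^\circ=S_i$; once this is established the graph part is forced by the structural compatibility $D_\Gamma(v_i)=\deg(D_{v_i}^\circ)$ characterizing divisors in the image of $\tau_*$, together with a degree count.

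First I would identify $D_{v_i}^\circ$ with $S_i$. By Proposition~\ref{troispartiesdiv}, $S=\text{div}_{\mathcal{X}_{\mathbb{K}}^\circ}(\alpha)$ with $\alpha|_{U_y}=Q(x)y^{k-g}$, where $Q\in\mathbb{K}(x)$ is assembled from $f$ by iterating the relation $y'=(f'/2f)y$. The commutative diagram~\eqref{initial_coefficient} identifies specialization to the component $C_{v_i}$ with the initial-coefficient map at the weight $u_i$, extended linearly to divisors. The key observation is that the initial degeneration at $u_i$ carries $f$ to $\text{in}_{u_i}(f)=\sum_{j=2i-1}^{2i+1}a_jX^j$, the defining polynomial of $C_{v_i}$, and hence carries the differential relation $y'=(f'/2f)y$ to its analogue on $C_{v_i}$. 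Because initial forms are multiplicative, $\text{in}_{u_i}(\alpha)=\text{in}_{u_i}(Q)\cdot Y^{k-g}=\widetilde{Q}_i(X)Y^{k-g}$, which is precisely the function produced by the Wronskian computation of Proposition~\ref{troispartiesdiv} applied to the basis $\mathcal{F}(i)$ of $H_i$, so its divisor on $C_{v_i}\setminus\mathcal{A}_i$ is the non-ramification part $S_i$ of $\text{Inf}(H_i)$. Thus $D_{v_i}^\circ=\text{div}_{U_i}(\text{in}_{u_i}(\alpha))=S_i$. Here I would invoke Remark~\ref{unambiguous_specialization} twice: to ensure that every point of $S$ specializes to the interior of some elliptic component (nothing leaks to an edge of $\Gamma$), and to ensure that no mass of $\tau_*(S)$ lands at the attachment points $0,\infty$ of $C_{v_i}$, these being dual to the finite edges $e_{i-1},e_i$. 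This confines $D_{v_i}^\circ$ to $C_{v_i}\setminus\mathcal{A}_i$, matching the support of $S_i$.

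Next I would pin down the graph part. The compatibility condition forces $D_\Gamma(v_i)=\deg(D_{v_i}^\circ)=\deg(S_i)$. To evaluate $\deg S_i$ I combine the degree formula $(r+1)(d+r(g'-1))$ on the genus-one curve $C_{v_i}$ (with $g'=1$, $r=2k-g$, $d=2k$), which gives $\deg\text{Inf}(H_i)=2k(2k-g+1)$, with Theorem~\ref{inflection_along_ell_curves}, which gives $\deg R_i=|0|+|\infty|+|(\al_1,0)|+|(\al_2,0)|=4\binom{g+1}{2}+2(k-g)(g-1)$. Subtracting yields $\deg S_i=4(k+1)(k-g)$, and crucially this value is independent of $i$. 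Since $S$ meets no edge of $\Gamma$, the divisor $D_\Gamma$ is supported on the vertices alone, so $D_\Gamma=4(k+1)(k-g)\sum_{i=1}^g v_i$. Assembling the two constituents gives the claimed formula $\tau_*(S)=4(k+1)(k-g)\sum_{i=1}^g v_i\oplus_{i=1}^g S_i$.

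The main obstacle will be the first step: rigorously justifying that forming the inflection divisor commutes with the initial degeneration, i.e.\ that $\text{in}_{u_i}(\alpha)$ \emph{equals} the Wronskian of the specialized basis $\mathcal{F}(i)$ rather than merely agreeing with it up to lower-order corrections. The cleanest route is to differentiate the relation $y'=(f'/2f)y$ symbolically, track how each coefficient's $t$-adic valuation interacts with the weight $u_i$, and verify that the surviving terms of $\text{in}_{u_i}(\alpha)$ are exactly those assembled from $\text{in}_{u_i}(f)$; multiplicativity of initial forms then yields $\text{in}_{u_i}(\alpha)=\widetilde{Q}_i(X)Y^{k-g}$ on the nose.
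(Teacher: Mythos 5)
Your proposal is correct and takes essentially the same route as the paper: the paper's own proof likewise reduces everything to the identity that the specialization $\alpha(i)$ of the Wronskian $\alpha|_{\mathcal{X}_{\mathbb{K}}^\circ}$ to $K(C_{v_i})$ equals the Wronskian $\alpha|_{U_i}$ of the specialized basis $\mathcal{F}(i)$, giving $D_{v_i}^\circ=\operatorname{div}\alpha|_{U_i}=S_i$. Your explicit degree count $\deg S_i = 2k(2k-g+1)-4\binom{g+1}{2}-2(k-g)(g-1)=4(k+1)(k-g)$ (which the paper leaves implicit) correctly pins down the vertex coefficients, and the commutation-with-degeneration step you flag as the main obstacle is precisely what the paper asserts, without further elaboration, as $\alpha|_{U_i}=\alpha(i)$.
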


\begin{proof}
Let $\alpha(i)$ denote the specialization of $\alpha|_{\mathcal{X}_\mathbb{K}^\circ}$ to $K(C_{v_i})$ and let $\alpha|_{U_i}$ denote the Wronskian of the basis \eqref{spe-eqs} restricted to the open set $U_i\subset C_{v_i}$. Then $S=\text{div }\alpha|_{\mathcal{X}_\mathbb{K}^\circ}$ and $S_i=\text{div }\alpha|_{U_i}$. 
The result follows since  $\alpha|_{U_i}=\alpha(i)$.
\end{proof}

Set $\mc{C}=\cup_i \mc{C}_i$. Note that points of ramification divisor $R_{\pi}$ of the hyperelliptic curve specialize to points of $\mc{C}$. Further, given Remark~\ref{unambiguous_specialization}, it is clear that the analogous statement holds at the level of inflection divisors. Namely, let $R_i(\mc{C}_i)$ denote the restriction to $\mc{C}_i$ of the inflection divisor of $H_i$ along $C_{v_i}$, $i=1,\dots,g$. The contribution $R$ of the $\pi$-ramification locus to $\text{Inf}(|\mathcal{L}_\mathbb{R}(kD)|)$ then specializes to the sum of inflectionary loci $\sum_{i=1}^g R_i(\mc{C}_i)$ along the elliptic components supported along $\mc{C}$. The following result implies that the specialization $R \leadsto \sum_{i=1}^g R_i(\mc{C}_i)$ is in fact {\it bijective}.

\begin{thm}[Contribution of $R_{\pi}$ to $\text{Inf}(|\mathcal{L}_\mathbb{R}(kD)|)$]\label{inflection_along_ramification_divisor} Write $\text{Inf}(|\mathcal{L}_\mathbb{R}(kD)|)= R+S$ as in Proposition~\ref{troispartiesdiv}. We have $R=\binom{g+1}{2}R_{\pi}$, where $R_{\pi}$ denotes the ramification divisor of $\pi$. 
\end{thm}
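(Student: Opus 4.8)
The plan is to reduce the statement to a purely local computation of inflectionary weights. Since Proposition~\ref{troispartiesdiv} exhibits $\text{Inf}(|\mathcal{L}_\mathbb{R}(kD)|)=R+S$ as a sum of effective divisors with disjoint supports, the coefficient of $\text{Inf}(|\mathcal{L}_\mathbb{R}(kD)|)$ at any ramification point $P$ of $\pi$ --- where $S$ by construction has no support --- equals the coefficient of $R$ at $P$, namely the inflectionary weight $|P|$. As the ramification divisor of the degree-two map $\pi$ is reduced, so that $R_{\pi}=\sum_{P}P$ with $P$ ranging over the $2g+2$ Weierstrass points $\{0,\infty,(\alpha_i,0)\}$, it suffices to show that $|P|=\binom{g+1}{2}$ for every such $P$.

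To compute $|P|$ I would determine the vanishing sequence of the basis $\{x^i\}_{0\le i\le k}\cup\{x^iy\}_{0\le i\le k-g-1}$ of $H^0(\mathcal{L}_\mathbb{R}(kD))$ at $P$, organizing the orders by means of the hyperelliptic involution $\sigma_h$. At each Weierstrass point $\sigma_h$ fixes $P$ and acts on the completed local ring so that a uniformizer may be chosen anti-invariant; consequently the $\sigma_h$-invariant functions $x^i$ vanish to even order while the anti-invariant functions $x^iy$ vanish to odd order. Concretely, at a finite Weierstrass point one has $\text{ord}_P(y)=1$ and $\text{ord}_P(x-x(P))=2$, so that $\{x^i\}$ realizes the even orders $0,2,\dots,2k$ and $\{x^iy\}$ the odd orders $1,3,\dots,2(k-g)-1$. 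The point $P=\infty$ requires the only real care: there $x$ and $y$ have poles, of orders $2$ and $2g+1$ respectively, and one must pass from pole orders to vanishing orders of sections of $\mathcal{L}_\mathbb{R}(kD)$ by normalizing against the generic pole order $2k$; doing so recovers exactly the same two runs of even and odd orders.

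In every case the vanishing sequence is therefore
\[
(0,1,2,\dots,2(k-g);\,2(k-g)+2,\,2(k-g)+4,\,\dots,\,2k),
\]
which agrees with the sequence at $(\alpha,0)$ found in the proof of Theorem~\ref{inflection_along_ell_curves}. Writing the $g$ terms of the second run as $2(k-g)+2m$ in position $2(k-g)+m$ for $m=1,\dots,g$, each contributes $m$ to the weight, whence $|P|=\sum_{m=1}^{g}m=\binom{g+1}{2}$, independently of $P$. Summing over the $2g+2$ Weierstrass points gives $R=\binom{g+1}{2}R_{\pi}$, as claimed. I expect the point at infinity to be the main obstacle, since there the identification of the vanishing sequence depends on correctly accounting for the twist of $\mathcal{L}_\mathbb{R}(kD)$ at $\infty$; away from $\infty$ the computation is a direct unwinding of the local structure of the double cover, and it is precisely the uniformity of the answer across the three geometric types of Weierstrass point that yields the clean identity $R=\binom{g+1}{2}R_{\pi}$.
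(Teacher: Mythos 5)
Your proposal is correct and follows essentially the same route as the paper's proof: a pointwise computation of the inflectionary weight at each of the $2g+2$ Weierstrass points, using the vanishing orders of the sections $x^i$ (even orders $0,2,\dots,2k$, via the span argument with $(x-x(P))^i$ at finite points) and $x^iy$ (odd orders $1,3,\dots,2(k-g)-1$, using $\mathrm{ord}_0(y)=1$ and the pole orders $2$ and $2g+1$ of $x$ and $y$ at $\infty$, normalized against $2k$), yielding the uniform weight $\binom{g+1}{2}$. Your $\sigma_h$-parity framing is a pleasant organizing device absent from the paper, but the underlying computation --- including the handling of $0$ and $\infty$ via $\mathrm{div}_X(x)$ and $\mathrm{div}_X(y)$ --- is the same.
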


\begin{proof}
Much as in the proof of Theorem~\ref{inflection_along_ell_curves}, we proceed by calculating the inflectionary weight in each point $P \in \mbox{Supp}(R_{\pi})$. If $P \notin \{0,\infty\}$, essentially the same argument used in proving Theorem~\ref{inflection_along_ell_curves} yields $P=\binom{g+1}{2}$. It remains to compute $|0|$ and $|\infty|$. For this purpose, we use the vanishing orders of the basis $\mc{F}$ of $H^0(X,\mc{L}(kD))$, which in turn are prescribed by $\mbox{div}_X(x)$ and $\mbox{div}_X(y)$, much as in Lemma~\ref{specialization_of_linear_series}. This time, we have
\[
\mbox{div}_X(x)= 2 \cdot 0- 2 \cdot \infty \text{ and } \mbox{div}_X(y)= 1 \cdot 0+ R_{\pi}^o- (2g+1) \cdot \infty
\]
where $R_{\pi}^o$ denotes the sum of the $2g$ simple ramification points of $\pi$ that lie inside $\mb{C}^*$. It follows that
\[
\begin{split}
\mbox{ord}(x^i,0)&= 2i, 0 \leq i \leq k \text{ and } \mbox{ord}(x^iy,0)= 2i+1, 0 \leq i \leq k-g-1;\\
\mbox{ord}(x^i,\infty)&= 2k-2i, 0 \leq i \leq k \text{ and } \mbox{ord}(x^iy,\infty)= 2k-2i-2g-1, 0 \leq i \leq k-g-1.
\end{split}
\]
The fact that $|0|\,=\,|\infty|\,=\, \binom{g+1}{2}$ now follows easily.
\end{proof}

The following result is an immediate consequence of Theorem~\ref{inflection_along_ramification_divisor}.
\begin{coro}
Assume that $k>g$. The real linear series $|\mc{L}_{\mb{R}}(kD)|$ then has at least $g(g+1)n(X)$ real inflection points. 
\end{coro}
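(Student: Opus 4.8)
The plan is to read the lower bound off directly from the exact computation of the ramification part of the inflection divisor in Theorem~\ref{inflection_along_ramification_divisor}, combined with a count of the real points of $R_{\pi}$ and the effectivity of the complementary divisor.

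First I would invoke Proposition~\ref{troispartiesdiv} together with Theorem~\ref{inflection_along_ramification_divisor} to write
\[
\text{Inf}(|\mathcal{L}_\mathbb{R}(kD)|)\,=\,R+S\,=\,\binom{g+1}{2}R_{\pi}+S,
\]
where $S$ is effective and $\sigma_X$-invariant. Since $R$ assigns the weight $\binom{g+1}{2}=\tfrac{g(g+1)}{2}$ to each point of $\text{Supp}(R_{\pi})$, the whole bound reduces to determining how many of these ramification points are real.

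Next I would count the real points of $R_{\pi}$ using the defining equation \eqref{HyperellipticCurve}. The branch locus of $\pi$ consists of $0$, of $\infty$, of the real points $p_1,\dots,p_{2n(X)-2}$, and of the conjugate non-real pairs $q_j,\overline{q_j}$; hence $\pi$ has exactly $2n(X)$ real branch points. Each such real branch point $b$ has a single, reduced, $\sigma_X$-invariant fiber $\pi^{-1}(b)=\{P\}$, so that $P$ is fixed by $\sigma_X$ and therefore lies in $X(\mathbb{R})$. Thus $\text{Supp}(R_{\pi})$ contains precisely $2n(X)$ real points.

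Finally, combining the two steps, the real part of $R$ has degree $\binom{g+1}{2}\cdot 2n(X)=g(g+1)n(X)$, and since $S$ is effective its real part contributes only non-negatively to the inflectionary weight. Hence $|\mathcal{L}_\mathbb{R}(kD)|$ has at least $g(g+1)n(X)$ real inflection points, counted with inflectionary weight, as claimed. The step requiring the most care is the count $2n(X)$ of real Weierstrass points; I would cross-check it against the $1\le k\le g$ case of Theorem~\ref{realpartcaseg}, where the identity $\text{Inf}(|\mathcal{L}_\mathbb{R}(kD)|)=\binom{k+1}{2}R_{\pi}$ must reproduce the stated $k(k+1)n(X)$ real inflection points, i.e. $\binom{k+1}{2}\cdot 2n(X)$, confirming the same tally of real ramification points.
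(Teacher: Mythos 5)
Your proof is correct and is essentially the paper's own argument: the paper derives the corollary as an immediate consequence of Theorem~\ref{inflection_along_ramification_divisor}, with precisely your count of $2n(X)$ real points of $R_{\pi}$ (namely $0$, $\infty$, and the $p_1,\dots,p_{2n(X)-2}$), each carrying weight $\binom{g+1}{2}$, and the effectivity of $S$ giving the inequality. Your cross-check against Theorem~\ref{realpartcaseg} via $\binom{k+1}{2}\cdot 2n(X)=k(k+1)n(X)$ is a valid consistency confirmation of that tally.
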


Finally, the following regeneration-type result sums up how $\text{Inf}(|\mathcal{L}_\mathbb{R}(kD)|)$ compares to the inflection divisors $\mbox{Inf}_{\mb{R}}(H_i)$ associated with the linear series $H_i$ along the elliptic components $C_{v_i}$, $i=1,\dots,g$.
\begin{thm}
\label{convergence}
Fix $g\,\geq\,2$ and $k\,\geq\, g$. Let $f\,=\,\sum_{j=0}^{2g+1}a_jx^j$ be a polynomial of degree
$2g+1$ in $\mathbb{R}[x]$ and let $\nu:[1,2g+1]\cap\mathbb{Z}\longrightarrow \mathbb{Z}_{\geq0}\cup\{\infty\}$ be a function inducing the subdivision $\{[2j-1,2j+1]\::\:j=1,\ldots,g\}$. Suppose that 
 \begin{itemize}
 \item for every $i=1,\ldots,g$, the polynomial $\sum_{j=2i-1}^{2i+1}a_jx^j$ is separable; and
 \item the polynomial $\sum_{j=0}^{2g+1}a_jt^{\nu(j)}x^j$ is separable, and vanishes in $x=0$.
 \end{itemize}
 For every $i$, let $\overline{U_i}$ be the compactification of the curve $V(y^2-\sum_{j=2i-1}^{2i+1}a_jx^j)$ inside $\mb{CP}^1 \times\mb{CP}^1$ and let $C_{v_i}$ be its normalization. Let $H_i$ be the real $g_{2k}^{2k-g}$ on $C_{v_i}$ spanned by the functions $1,x,\ldots,x^g$; $y,xy,\ldots,x^{k-g-1}y$. For %generic values of 
 $0<\varepsilon \ll 1$, the linear series $|\mathcal{L}_{\mathbb{R}}(kD)|$ on the real hyperelliptic curve $V(y^2-f|_{t=\varepsilon})$ satisfies
\begin{equation}\label{inflection_under_specialization}
\begin{split}
 \text{deg}_{\mathbb{R}}\:\text{Inf}(|\mathcal{L}_{\mathbb{R}}(kD)|)
 &=\sum_{i=1}^g\text{deg}_{\mathbb{R}}\:(\text{Inf}(H_i))- g(g-1)(2k-g+1).
\end{split}
 \end{equation}
\end{thm}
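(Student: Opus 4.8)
The plan is to derive \eqref{inflection_under_specialization} by combining a purely numerical (Plücker-type) degree count with a reality-preserving conservation of inflection under the Jonsson--Viro specialization. First I would record the two complex degree computations that frame the problem: by Section~\ref{RealHyperCurves} the complete series $|\mathcal{L}(kD)|$ on $\mathcal{X}_{\mathbb{K}}$ has total inflection degree $g(2k-g+1)^2$, while each $H_i$ is a $g^{2k-g}_{2k}$ on the genus-one curve $C_{v_i}$, so the Plücker formula gives $\text{deg}\:\text{Inf}(H_i)=(2k-g+1)\cdot 2k$. Summing over the $g$ components and subtracting,
\[
\sum_{i=1}^g \text{deg}\:\text{Inf}(H_i)-\text{deg}\:\text{Inf}(|\mathcal{L}(kD)|)=g\bigl[2k(2k-g+1)-(2k-g+1)^2\bigr]=g(g-1)(2k-g+1),
\]
so the correction term in \eqref{inflection_under_specialization} is exactly the \emph{complex} excess of the elliptic inflection over the hyperelliptic inflection.

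Next I would identify this excess geometrically as the inflection of the $H_i$ concentrated at the \emph{points of attachment} $0,\infty$ joining adjacent components, i.e.\ the type-II vertices of the chain. By Theorem~\ref{inflection_along_ell_curves} one has $|0|+|\infty|=2\binom{g+1}{2}+2(k-g)(g-1)$ independently of $i$; summing over the $g$ components and removing the two contributions $|0|$ on $C_{v_1}$ and $|\infty|$ on $C_{v_g}$ --- which are the \emph{global} ramification points $0,\infty\in\text{Supp}(R_\pi)$, each of weight $\binom{g+1}{2}$ --- telescopes to
\[
g\bigl[2\tbinom{g+1}{2}+2(k-g)(g-1)\bigr]-2\tbinom{g+1}{2}=g(g-1)(2k-g+1).
\]
Crucially the attachment points $0$ and $\infty$ are real and each $H_i$ is a real series, so this excess inflection is \emph{entirely real}; it thus contributes the full $g(g-1)(2k-g+1)$ to $\sum_i\text{deg}_{\mathbb{R}}\:\text{Inf}(H_i)$ while having no counterpart on $\mathcal{X}_{\mathbb{K}}$.

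It then remains to match the \emph{remaining} real inflection --- supported away from the attachment points --- on the two sides. Here I would invoke the specialization apparatus of this section together with the decomposition \eqref{decomposition_of_skeleton}: by Theorem~\ref{inflection_along_ramification_divisor} the ramification contribution $R=\binom{g+1}{2}R_\pi$ specializes bijectively onto the type-I inflection loci $\sum_i R_i(\mathcal{C}_i)$, while by the Proposition computing $\tau_*(S)=4(k+1)(k-g)\sum_i v_i\oplus_{i=1}^g S_i$ the interior part $S$ specializes onto the interior inflections $S_i=\text{div}\,\alpha|_{U_i}$ of the $H_i$. Because the hybrid family is $\sigma$-equivariant, the retraction $\tau$ commutes with conjugation; and because $q^\#$ is open over $D_\delta$ for $0\le\delta\ll1$, the real points of the fiber $\mathcal{Z}^\#_\varepsilon$ neither appear nor disappear as $\varepsilon\to 0^+$ but are carried bijectively, preserving reality, onto the real points of $\mathcal{Z}^\#_0$ lying on the components $C_{v_i}$. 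Adding the real type-I and interior correspondences gives $\text{deg}_{\mathbb{R}}\:\text{Inf}(|\mathcal{L}_{\mathbb{R}}(kD)|)=\sum_i\bigl(\text{deg}_{\mathbb{R}} R_i(\mathcal{C}_i)+\text{deg}_{\mathbb{R}} S_i\bigr)$, and restoring the fully-real attachment excess yields \eqref{inflection_under_specialization}.

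The main obstacle is the reality-conservation in this last step: one must verify that, under the real specialization, no real inflection point of the $\varepsilon$-curve collides with a complex-conjugate partner, escapes along an infinite-length edge, or flows onto the finite edges $e_1,\dots,e_{g-1}$ of the skeleton. The openness of $q^\#$ controls the first two by ensuring the $\sigma$-equivariant retraction is a local homeomorphism near the central fiber, while the third is precluded by Remark~\ref{unambiguous_specialization}, which shows that inflection specializes unambiguously to the elliptic components and never to the connecting edges. Checking these degenerate scenarios carefully --- in particular using the separability hypotheses to keep the ramification points and the interior inflection loci distinct and real for small $\varepsilon$ --- is where the genuine analytic content of the argument resides.
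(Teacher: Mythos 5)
Your proposal is correct and follows essentially the same route as the paper: the correction term $g(g-1)(2k-g+1)$ arises as the total inflectionary weight at the $(g-1)$ pairs of attachment points (your telescoping of the $|0|+|\infty|$ weights from Theorem~\ref{inflection_along_ell_curves} is equivalent to the paper's per-pair computation $2k(2k-g+1)-2\binom{2k-g+1}{2}=g(2k-g+1)$ via Corollary~\ref{compatibility}), and the passage to the $\varepsilon$-curve is exactly the paper's argument that $q^\#$ is open over $D_\delta$ and respects real parts because it is a real deformation. Your Pl\"ucker cross-check of the correction term and the explicit invocation of the $R+S$ specialization statements are sound refinements of the same method rather than a different proof.
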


\begin{proof}
Since the polynomial $\sum_{j=0}^{2g+2}a_jt^{\nu(j)}x^j$ is separable, the curve
\[
\mathcal{X}_{\mathbb{K}}(\mathbb{K})\,=\,V(y^2-\sum_{j=0}^{2g+2}a_jt^{\nu(j,0)}x^j)
\]
is hyperelliptic. Since $\sum_{j=2i-1}^{2i+1}a_jx^j$ is separable for
each $i\,=\,1,\,\ldots,\,g$, it follows that the linear series $|\mathcal{L}(kD)|$ on
$\mathcal{X}_{\mathbb{K}}(\mathbb{K})$ specializes to $H_i$ on (the normalization of)
$V(y^2-\sum_{j=2i-1}^{2i+1}a_jx^j)$. It follows that 
\[
\text{deg}\:\text{ Inf}(|\mathcal{L}_\mathbb{R}(kD)|)=\sum_{i=1}^g\text{deg}\:\text{ Inf}(H_i)- (\text{total inflectionary weight in points of attachment}).
\]
On the other hand, Corollary~\ref{compatibility} implies that the inflectionary weight contributed by each of the $(g-1)$ pairs of neighboring points of attachment is computed by
\[
2k(2k-g+1)- 2 \cdot \binom{2k-g}{2}= g(2k-g+1)
\]
which yields \begin{equation}
\begin{split}
 \text{deg} \:\text{Inf}(|\mathcal{L}_{\mathbb{R}}(kD)|)
 &=\sum_{i=1}^g\text{deg} \:(\text{Inf}(H_i))- g(g-1)(2k-g+1).
\end{split}
 \end{equation}.

\medskip
Further, for $0<\varepsilon \ll 1$, the divisor $\text{Inf}(|\mathcal{L}(kD)|)$ on $\mathcal{X}_{\mathbb{K}}(\mathbb{K})$ deforms to the divisor $\text{Inf}(|\mathcal{L}_\mathbb{R}(kD)|)$ of the real hyperelliptic curve $V(y^2-f|_{t=\varepsilon})$, as the projection $q^\#$ is {\it open} above the Archimedean disk $D_\delta$. Finally the deformation $q^\#$ respects the real part, as it is a real deformation.
\end{proof}

It is worth emphasizing that in our construction, our specialization morphism is defined by the initial coefficient morphism \eqref{initial_coefficient}, which in turn is induced by the embedded tropicalization morphism \eqref{mortrop}. We close this section by pointing out a further important difference between our method and that of \cite{AB}. 

\medskip
Let $\Omega$ be the metrized complex of curves obtained from $\text{Sk}(\mathcal{X}_0^\#)$ by
forgetting the marked points $\text{Supp}(R_\pi)$. Namely, $\Omega$ is given by the tuple
$$(G \,=\,(V_\Omega ,\,E_\Omega ),\,\ell_\Omega ,\,\{(C_{v_i},\,\mathcal{G}_i)\}_i)\, ,$$ where
$V_\Omega\,=\,\{v_1,\ldots,v_g\}$ and $E_\Omega\,=\,\{e_1,\ldots,e_{g-1}\}$, while
the function $\ell_\Omega$ is the restriction of $\ell$, and
$\mathcal{G}_i\subset\mathcal{A}_i$ is the set of points of attachment of $C_{v_i}$.

\medskip
Since $V_\Omega$ is a semistable vertex set for $\mathcal{X}_0^\#$, there is a semistable model $\mathfrak{X}$ for $\mathcal{X}_{\mathbb{K}}$ over the valuation ring $\mathbb{K}^{\circ}$ for which the corresponding metrized complex $\mathfrak{CX}$ is the metrized complex $\Omega$.

\medskip
 The Amini--Baker specialization map $$\tau_*^{\mathfrak{CX}}:\text{Div}(\mathcal{X}_{\mathbb{K}})\longrightarrow \text{Div}(\mathfrak{CX})$$
 is constructed as the composition of the reduction morphism $\text{red}:\mathcal{X}_{\mathbb{K}}\longrightarrow\mathfrak{X}_s(\mathbb{C})$ with the retraction morphism $\tau:\mathcal{X}_{\mathbb{K}}\longrightarrow\Gamma$, applying the fundamental identification 
 \[
 \tau^{-1}(v_i)\,=\,\text{red}^{-1}(C_{v_i}\setminus\mathcal{G}_i)\, .
 \]
 
The two specialization maps $\tau_*^{\text{Sk}(\mathcal{X}^\#_0)}$ and $\tau_*^{\mathfrak{CX}}$
differ since in the algebraic setting, the metrized complex $\mathfrak{CX}$ represents an actual
semistable curve $\mathfrak{X}_s(\mathbb{C})$, while $\text{Sk}(\mathcal{X}_0^\#)$ does not.
Finally, note $\mathcal{X}_{\mathbb{C}((t))}$ is defined over the valuation ring
$\mathbb{K}^{\circ}$, so it is equipped with a natural model over $\mathbb{K}^{\circ}$, but
the latter is not semistable. 

\section{Combinatorial construction of curves}\label{Combinatorial_construction}

Fix $k\,>\,g\,>\,1$. In order to construct real algebraic hyperelliptic  curves of genus $g$ such that $|\mathcal{L}_\mathbb{R}(kD)|$ has a controlled number of real inflection points, we apply Theorem~\ref{convergence}. Let $\Theta$ denote the regular subdivision of the triangle $\Delta$ introduced in Section 4. %from the preceding sections. 
%It follows from Theorem~\ref{convergence} that
The algebraic input for our construction is a $g$-tuple $\{Q_1,\,\ldots,\,Q_g\}$ of polynomials in
$\mathbb{R}[x]$ of the form 
\[
 Q_i= a_ix^{2i-1}(x-x_{i,1})(x-x_{i,2})
\]
for which
\begin{enumerate}
\item $x_{i,1} \neq  x_{i,2}$ and $x_{i,j} \neq 0$ for $i=1, \dots, g$ and $j=1,2$; and
\item the $a_{i}\in\mathbb{R}^*$ satisfy the patchworking conditions $a_{i+1}x_{i,1}x_{i,2}=a_i$ for $i=1,\ldots,g-1$.
\end{enumerate}

Note, in particular, that $\{Q_1,\ldots,Q_g\}$ is uniquely prescribed by the $2g+1$ parameters $\{a_{1},x_{i,j}\:|\:i=1, \dots, g,\:j=1,2\}$. Now let $E_i$ denote the normalization of (the compactification of) $V(y^2-Q_i)\subset \mb{CP}^1 \times \mb{CP}^1$; as explained at the end of Section~\ref{RealHyperCurves}, the fact that $Q_i\in \mathbb{R}[x]$ implies the following dichotomy: 
\begin{itemize}
\item either $x_{i,j}\in\mathbb{C}\setminus\mathbb{R}$, in which case $n(E_i)=1$;
\item otherwise, $n(E_i)=2$.
\end{itemize}

\medskip
The distribution of real inflection points of complete real linear series of degree $d\geq 2$ on a real elliptic curve $E$ was completely characterized in \cite[Thm 3.2.5]{G}; it is predicated on the fact that inflection points are in bijection with $d$-torsion points, which may be visualized on the universal cover of $E$. A natural question is how this result generalizes to a description of real inflection points of an {\it incomplete} real series along $E$. For the sake of completeness, we recall the explicit classification of {\it loc. cit.}, and finish by commenting on how it might be generalized.

\medskip
Accordingly, let $E=(E_\mathbb{C},\sigma_E)$ be a real algebraic curve of genus 1 with
$E(\mathbb{R})\neq\emptyset$, and let $V$ be
a real complete linear series of degree $d\geq 2$. Then $V$ has always $d$ real inflection points when $E(\mathbb{R})$ is connected. On the other hand, when $n(E)=2$, the distribution of real inflection points of $V$ is determined by the parity vector $c(V)\in(\mathbb{Z}/2\mathbb{Z})^2$ according to the following trichotomy:
\begin{enumerate}
\item[(i)] if $c(V)=(1,0)$ or $c(V)=(0,1)$, then
$V$ has $d$ real inflection points located on the connected component of $E(\mathbb{R})$
on which $V$ has odd degree;
 
\item[(ii)] if $c(V)=(0,0)$, then $V$ has $d$ real inflection points on each component;

\item[(iii)] if $c(V)=(1,1)$, then $V$ has no real inflection point.
 \end{enumerate}
 
In our situation, $V$ is not itself complete, but rather embeds in the complete linear series $|\mc{L}|=|\mc{L}(2k \cdot \infty)|$ of even degree $2k$. So because the parity of the series is equal to the sum of the parities of its restrictions to individual real components of $E$, we may disregard possibility (i) when $n(E)=2$ above. Similarly, if $n(E)=2$, then $c(|\mc{L}|)=(0,0)$. So either $E(\mathbb{R})$ is connected or $n(E)\,=\,2$ and $c(|\mc{L}|)\,=\,(0,0)$. We know then that $|\mc{L}|$ has $d$ real inflection points on each component of $E(\mb{R})$, that $|\mc{L}|$ is spanned by sections
\[
\wt{\mc{F}}= \{1,x,\dots,x^k; y,yx, \dots, yx^{k-2}\}
\]
and the question is how these relate, if at all, to the inflection points of the sub-series spanned by
\begin{equation}\label{restricted_basis}
\mc{F}= \{1,x,\dots,x^k; y,yx, \dots, yx^{k-g-1}\}.
\end{equation}

\medskip
Here $y^2=Q(x)$ is an affine equation for $E$, and $Q \in \mb{R}[x]$ is a real cubic polynomial. Note that by fixing for our choice of origin with respect to the group law the point $\infty \in E$ (which we take, as usual, to mean the preimage of $\infty \in \mb{P}^1$ under the projection $(x,y) \longmapsto x$), we obtain a natural bijection between $2k$-torsion points of $E$ and inflection points of $|\mc{L}|$. Explicitly: $P \in E$ is an inflection point of $|\mc{L}|$ if and only if $h^0(\mc{L}(-2kp)) \neq 0$, but $\mc{L}(-2kp)$ is exactly the $2k$-th tensor product $\mc{O}(\infty-p)^{\otimes 2k}$. So because $\infty$ is the origin with respect to the group law, the assertion is clear.

\medskip
Furthermore, since Theorem~\ref{inflection_along_ell_curves}
computes the inflectionary weight of $\mc{F}$ at the boundary points $0, \infty$, $(x_{i,1},0)$ and $(x_{i,2},0)$ of $E_i$, it suffices to count real inflection points of $\mc{F}$ that lie inside $E_i\cap(\mathbb{C}^*)^2$. More precisely, we need to know how many of the $4(k+1)(k-g)$ inflection points of $\mc{F}$ in $E_i\cap(\mathbb{C}^*)^2$ are real. Note that this number is independent of $i$.

\medskip
Note also that Theorem~\ref{inflection_along_ell_curves} yields lower bounds for certain incomplete real
linear series on real elliptic curves. We make this precise as follows.

\medskip
Let $2\leq g\leq k$, and let $H(k)$ denote the real vector space with basis
$$\{x^0,\ldots,x^k;x^0y,\ldots,x^{k-2}y\}\, .$$ Let $V(g) \sub H(k)$ denote the subspace of codimension $g-1>0$ with
basis $$\{x^0,\ldots,x^k;x^0y,\ldots,x^{k-g-1}y\}\, .$$
Then $V(g)$ is a real $g_{2k}^{2k-g}$ on the real elliptic curve $E$. We write $\text{Inf}(V(g))=R+S$ and $s_\mathbb{R}(n(E))=\text{deg}_\mathbb{R}(S)$. We now exhibit real hyperelliptic curves with controlled number of inflection points.

%Let $\omega_\mathbb{C}(k,g)$ be the degree of its inflection divisor, namely $4k^2-2k(g-1)$, and let $\omega_\mathbb{R}(k,g)$ be the degree of the real part of this inflection divisor.

\begin{thm}\label{lower_bounds_for_reality}
Given $k>g>1$ and $1\leq n\leq g+1$, let $a=0$ if $n=g+1$ and let $a=1$ otherwise. There exists a real hyperelliptic curve $X$ of topological type $(g,n,a)$ such that 
 \begin{equation}\label{lower_bounds}
\text{deg}_\mathbb{R}\:\text{Inf}(|\mathcal{L}_\mathbb{R}(kD)|)= (g(g+1)+s_\mathbb{R}(2))(n)+(g-n+1)s_\mathbb{R}(1)-s_\mathbb{R}(2)
\end{equation}
for each pair $(s_\mathbb{R}(1),s_\mathbb{R}(2))$. %Here $a=0$ if $n=g+1$ and $a=1$ otherwise.
%$(g,n,a)$ is $(g,n,0)$ if $n=g+1$ and $(g,n,1)$ otherwise.
\end{thm}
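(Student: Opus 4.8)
\emph{The plan} is to realize $X$ through the combinatorial construction of this section and then read off $\deg_{\mathbb{R}}\text{Inf}(|\mathcal{L}_\mathbb{R}(kD)|)$ from Theorem~\ref{convergence}, carefully tracking which inflectionary contributions are real. First I would fix the reality types of the $g$ elliptic building blocks: let $m$ be the number of indices $i$ for which $Q_i$ has two real roots $x_{i,1},x_{i,2}\in\mathbb{R}^*$ (so that $n(E_i)=2$), and take $m=n-1$, leaving $g-m=g-n+1$ blocks with conjugate non-real roots $x_{i,1}=\overline{x_{i,2}}$ (so that $n(E_i)=1$). In either case $x_{i,1}x_{i,2}\in\mathbb{R}^*$, so the patchworking relations $a_{i+1}x_{i,1}x_{i,2}=a_i$ can be solved recursively over $\mathbb{R}^*$; hence this prescription is compatible with conditions (1) and (2), and for generic parameters the separability hypotheses of Theorem~\ref{convergence} hold.

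Next I would check that the curve $X=V(y^2-f|_{t=\varepsilon})$, for $0<\varepsilon\ll1$, has topological type $(g,n,a)$. This is a root count: the branch polynomial $f$ carries the root $x=0$ together with the $2g$ roots that specialize to the $x_{i,j}$, and by the patchworking convergence of Sections~\ref{Jonsson-Viro}--\ref{Section_Specialization} these retain their reality for small $\varepsilon$. Thus $f|_{t=\varepsilon}$ has exactly $2m+1=2n-1$ real roots; comparing with the normal form \eqref{HyperellipticCurve}, which has $2n(X)-1$ real branch points, yields $n(X)=n$, and the value of $a$ is then forced by Remark~\ref{comp}.

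With $X$ in hand, I would compute each $\deg_{\mathbb{R}}\text{Inf}(H_i)=\deg_{\mathbb{R}}(R_i)+\deg_{\mathbb{R}}(S_i)$. The ramification points $0$ and $\infty$ of $E_i$ are always real and, by Theorem~\ref{inflection_along_ell_curves}, contribute $|0|+|\infty|=2\binom{g+1}{2}+2(k-g)(g-1)$ independently of $i$; the points $(x_{i,1},0),(x_{i,2},0)$ are real precisely when $n(E_i)=2$, each adding $\binom{g+1}{2}$. Hence $\deg_{\mathbb{R}}(R_i)$ equals $2\binom{g+1}{2}+2(k-g)(g-1)$, augmented by $2\binom{g+1}{2}$ exactly on the $m$ two-component blocks, while $\deg_{\mathbb{R}}(S_i)=s_\mathbb{R}(n(E_i))$ equals $s_\mathbb{R}(2)$ on those $m$ blocks and $s_\mathbb{R}(1)$ on the other $g-m$. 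Summing over $i$ and inserting this into Theorem~\ref{convergence} (whose correction term $g(g-1)(2k-g+1)$ is entirely real, being supported at the real attachment points $0,\infty$), the $i$-independent part collapses: using $2\binom{g+1}{2}=g(g+1)$ and the cancellation $2g(k-g)(g-1)-g(g-1)(2k-g+1)=-g(g^2-1)$, one is left with $g(g+1)n+(n-1)s_\mathbb{R}(2)+(g-n+1)s_\mathbb{R}(1)$ after substituting $m=n-1$, which is exactly \eqref{lower_bounds}.

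The arithmetic of the final step is routine. \emph{The main obstacle} is the reality bookkeeping of the first two steps: verifying that one can independently prescribe the number of real components of each elliptic piece while respecting the patchworking constraints, and that Viro's convergence then transports this reality faithfully to the global topological type of $X$. Once the dictionary ``$m$ two-component blocks $\leftrightarrow n(X)=m+1$'' is established, the formula is forced.
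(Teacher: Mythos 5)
Your proposal is correct and takes essentially the same route as the paper: the paper's (far terser) proof likewise patchworks $n-1$ elliptic blocks with $n(E)=2$ and $g+1-n$ blocks with $n(E)=1$ realizing the prescribed pair $(s_\mathbb{R}(1),s_\mathbb{R}(2))$, and invokes Theorems~\ref{inflection_along_ramification_divisor} and~\ref{convergence} to arrive at $g(g+1)n+(n-1)s_\mathbb{R}(2)+(g-n+1)s_\mathbb{R}(1)$ real inflection points. The only cosmetic difference is that you reassemble the ramification contribution $g(g+1)n$ from the marked-point weights of Theorem~\ref{inflection_along_ell_curves} together with the attachment-point correction $g(g-1)(2k-g+1)$, whereas the paper reads it off directly from $R=\binom{g+1}{2}R_\pi$; your cancellation $2g(k-g)(g-1)-g(g-1)(2k-g+1)=-g(g^2-1)$ and the final tally check out.
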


%Let $E$ denote an elliptic curve with $n(E)=2$ and $m_2$ real inflection points away from its ramification locus, and let $E^{\prime}$ denote an elliptic curve with $n(E)=1$ and $m_1$ real inflection points away from its ramification locus. Then for any $0\leq n\leq g$, there exists a real hyperelliptic curve $X$ for which
%\begin{equation}\label{lower_bounds_old}
%\omega_\mathbb{R}(k,g)\geq (g(g+1)+m_2)(n+1)+(g-n)m_1.
%\end{equation}

\begin{proof}
For a given pair  $(s_\mathbb{R}(1),s_\mathbb{R}(2))$, we choose elliptic curves $E_1$ and $E_2$ such that the $S$ part of the inflection divisor of $V(g)$ on $E_i$ has real degree $s_\mathbb{R}(i)$ for $i=1,2$.

\medskip
Then using  $E_1$ a total of $g+1-n$ times and $E_2$ a total of $n-1$ times, we can deform them to obtain a real hyperelliptic curve with $g(g+1)n +(n-1)s_\mathbb{R}(2)+(g-n+1)s_\mathbb{R}(1)$ real inflection points, %amount that can be computed using 
according to Theorem~\ref{inflection_along_ramification_divisor} and \ref{convergence}.
\end{proof}

\medskip
In order to improve upon the amounts in Theorem~\ref{lower_bounds_for_reality}, the key issue is to understand how inflection points of the restricted basis \eqref{restricted_basis} %lift to the universal cover of $E$.
are distributed.
 Once this (together with the appropriate generalization of \cite[Thm 3.2.5]{G}) has been achieved, select polynomials $Q_1,\ldots,Q_g\in\mathbb{R}[x]$ as above and set $E_i:=V(y^2-Q_i)$. Let $\nu:\Delta\cap\mathbb{Z}^2\longrightarrow \mathbb{Z}\cup\{\infty\}$ be a function that induces $\Theta$. We then patchwork the polynomials $Q_i$ to form the larger polynomial $Q=y^2-\sum_{i=0}^{2g+2}a_{j,i}t^{\nu(i,0)}x^i$. For $0< \ep \ll 1$, the number of real inflection points of the real hyperelliptic curve defined by $Q|_{t=\ep}$ is then dictated by Theorem~\ref{convergence}. Our result \cite[Thm 5.1]{CG} conjecturally settles the case in which each $E=E_i$ is {\it maximally real}, i.e. has a real locus $E(\mb{R})$ with two connected components.

\end{document}